\documentclass[12pt,letterpaper]{amsart}

\usepackage[utf8]{inputenc}
\usepackage{enumerate}%
\usepackage{amsmath,amssymb}
\usepackage{cite}
\usepackage{comment}
\usepackage{amsfonts}
\usepackage{mathrsfs}
\usepackage{xcolor}
\usepackage[colorlinks, linkcolor = blue, anchorcolor = blue, citecolor = blue]{hyperref}
\usepackage{hyperref}
\usepackage{cleveref} 

\theoremstyle{definition}
\newtheorem{definition}{Definition}[section]
\newtheorem{remark}[definition]{Remark}

\theoremstyle{plain}
\newtheorem{theorem}[definition]{Theorem}
\newtheorem*{conjecture}{Conjecture}

\newtheorem{lemma}[definition]{Lemma}
\newtheorem{proposition}[definition]{Proposition}
\newtheorem{corollary}[definition]{Corollary}

\numberwithin{equation}{section}

\newcommand{\R}{\mathbb{R}}

\renewcommand{\S}{\mathbb{S}}

\newcommand{\dom}{\operatorname{dom}}

\newcommand{\vol}{V}

\newcommand{\beq}{\begin{equation}}
\newcommand{\eeq}{\end{equation}}
\newcommand{\beqs}{\begin{eqnarray*}}
\newcommand{\eeqs}{\end{eqnarray*}}
\newcommand{\beqn}{\begin{eqnarray}}
\newcommand{\eeqn}{\end{eqnarray}}
\newcommand{\beqa}{\begin{array}}
\newcommand{\eeqa}{\end{array}}

\title[]{LOGARITHMIC BRUNN--MINKOWSKI INEQUALITY
\\UNDER $n-2$ REFLECTION SYMMETRIES}

\author{XiaoRui Lu}
\address{School of Mathematical Sciences, Zhejiang University, Hangzhou 310058, China}
\email{12435035@zju.edu.cn}

\date{}

\keywords{convex body, reflection symmetry, log-Brunn-Minkowski inequality}

\begin{document}

\begin{abstract}
We prove the log-Brunn-Minkowski inequality for origin-symmetric convex bodies with symmetries to n-2 orthogonal hyperplanes, and discuss the equality
case and the uniqueness of the related logarithmic Minkowski problem. We also generalize the inequality from the Lebesgue measure to certain log-concave measure.
\end{abstract}
\maketitle

\baselineskip16pt
\parskip3pt



\section{Introduction}
The classical Brunn-Minkowski inequality is one of the fundamental principles of
convex geometry.  If $K,L\subset\R^n$ are convex bodies and
$0\leq\lambda\leq1$, then
\begin{equation}\label{eq:classical-bm}
 \vol\bigl((1-\lambda)K+\lambda L\bigr)^{\frac{1}{n}}
 \geq (1-\lambda)\vol(K)^{\frac{1}{n}}+\lambda\vol(L)^{\frac{1}{n}},
\end{equation}
with equality if and only if $K$ and $L$ are homothetic for $0 < \lambda < 1$.
It has become a starting point for several extensions of the classical Minkowski theory; see
\cite{Schneider2014} for background.

In the 1960s, Firey \cite{Firey1962} generalized the Minkowski combination of convex bodies to the $L_p$-Minkowski combination $(1 - \lambda) \cdot K +_p \lambda \cdot L$. When $p \geq 1$, for convex bodies $K$ and $L$ containing the origin in their interiors, it is defined as the convex body with support function
 $h_{(1-\lambda)\cdot K+_p\lambda\cdot L}
   :=((1-\lambda)h_K^p+\lambda h_L^p)^{\frac{1}{p}}$. The associated Brunn--Minkowski--Firey inequality reads as follows for $p \geq 1$,
\begin{equation}\label{eq:firey-bmi}
 \vol\bigl((1-\lambda)\cdot K+_p\lambda\cdot L\bigr)^{\frac{p}{n}}
 \geq (1-\lambda)\vol(K)^{\frac{p}{n}}+\lambda\vol(L)^{\frac{p}{n}},
\end{equation}
which has an equivalent form
\begin{equation}\label{eq:firey-bmieq}
 \vol\bigl((1-\lambda)\cdot K+_p\lambda\cdot L\bigr)
 \geq \vol(K)^{1-\lambda}\vol(L)^{\lambda},
\end{equation}
Lutwak's work in the 1990s placed this inequality, the $L_p$ mixed volumes,
and the $L_p$-Minkowski problem in a systematic theory
\cite{Lutwak1993,Lutwak1996}.

\indent However, when $p \in (0, 1)$, $((1-\lambda)h_K^p+\lambda h_L^p)^{\frac{1}{p}}$ may not be a support function for any convex body in general. Böröczky, Lutwak, Yang and Zhang \cite{BLYZ2012} found a natural generalization of $L_p$-Minkowski sum as follows
\[
(1 - \lambda) \cdot K +_p \lambda \cdot L := \bigcap_{x \in \mathbb{S}^{n-1}} \{z \in \mathbb{R}^n : x \cdot z \le ((1 - \lambda)h_K^p(x) + \lambda h_L^p(x))^{\frac{1}{p}}\}.
\]
It is natural to further define the log-Minkowski combination ($L_0$-sum), $(1 - \lambda) \cdot K +_0 \lambda \cdot L$, by
\[
(1 - \lambda) \cdot K +_0 \lambda \cdot L := \bigcap_{x \in \mathbb{S}^{n-1}} \{z \in \mathbb{R}^n : x \cdot z \le h_K(x)^{1-\lambda}h_L(x)^\lambda\}.
\]

B\"or\"oczky, Lutwak, Yang and Zhang \cite{BLYZ2012} conjectured that, for
origin-symmetric convex bodies, \eqref{eq:firey-bmieq} remains true for
$0\leq p<1$. When $p=0$, \eqref{eq:firey-bmieq} is called log-Brunn-Minkowski inequality, stronger than the classical Brunn-Minkowski inequality and remains open in
dimensions $n\geq 3$.

\begin{conjecture} [log-Brunn-Minkowski conjecture]
If $K$ and $L$ are origin-symmetric convex bodies in $\mathbb{R}^n$, then for any $\lambda \in (0,1)$, we have
\begin{equation}
V((1-\lambda)\cdot K +_0 \lambda \cdot L) \geq V(K)^{1-\lambda}V(L)^\lambda.
\end{equation}
In addition, equality holds if and only if $K = K_1 + \dots + K_m$ and $L = L_1 + \dots + L_m$ for compact convex sets $K_1, \dots, K_m, L_1, \dots, L_m$ of dimension at least one where $\sum_{i=1}^m \dim K_i = n$ and $K_i$ and $L_i$ are homothetic, $i = 1, \dots, m$.
\end{conjecture}

Let us briefly recall the known results and approaches to log-Brunn-Minkowski inequality.  The planar case was proved in
\cite{BLYZ2012}.
\begin{theorem}[Böröczky-Lutwak-Yang-Zhang]
\label{thm:BLYZ}
If $K$ and $L$ are origin-symmetric convex bodies in the plane, then for all $\lambda \in [0, 1]$,
\begin{equation}
V_2((1 - \lambda)\cdot K +_0 \lambda \cdot L) \ge V_2(K)^{1-\lambda}V_2(L)^{\lambda}. 
\end{equation}
When $\lambda \in (0, 1)$, equality holds if and only if $K$ and $L$ are dilates or $K$ and $L$ are parallelograms with parallel sides.
\end{theorem}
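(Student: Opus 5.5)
The plan is to follow the local approach: reduce the planar log-Brunn--Minkowski inequality to a local spectral statement and then verify that statement by a direct two-dimensional computation. Since $\lambda\mapsto \log V_2((1-\lambda)\cdot K+_0\lambda\cdot L)$ is the quantity to be controlled, it suffices to show it is concave in $\lambda$. By approximation I first reduce to origin-symmetric bodies of class $C^2_+$. I then consider the path $h_t = h_K^{1-t}h_L^{t}$, which is a genuine support function in this regime, or at least for $t$ near a given point after the approximation. The standard reduction then says that concavity of $\log V_2$ along $h_t$ follows from the local inequality: for every even $C^2$ function $\varphi$ on $\S^1$, with $h=h_K$,
\begin{equation*}
\int_{\S^1}\frac{\varphi^2}{h}\,dS_K-\int_{\S^1}\varphi\,(\varphi''+\varphi)\,d\theta \;\ge\; \frac{1}{V_2(K)}\Bigl(\int_{\S^1}\varphi\,dS_K\Bigr)^2 .
\end{equation*}
The first and third terms come from differentiating $\log V_2$ twice along $h_K e^{t\varphi}$; the second is the mixed-area second variation. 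Equivalently, the operator $L\varphi = h\,(\varphi''+\varphi)/(h''+h)$ has no even eigenfunction with eigenvalue in $(1,2)$ beyond the trivial ones.

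The key step is proving this spectral statement on $\S^1$, and I expect it to be the main obstacle. My first attempt would write $\rho=h''+h>0$ and use the polar parametrization of $\partial K$ to express the Rayleigh quotient $\int h\rho^{-1}(\varphi''+\varphi)^2\,d\theta\big/\int \varphi(\varphi''+\varphi)\,d\theta$. I would then use evenness to restrict to functions with only even Fourier modes in the angle. In the plane a Sturm--Liouville/Wirtinger-type argument should show that the second eigenvalue on even functions is at least $2$. The idea is to substitute $\varphi = h\psi$, integrate by parts, and reduce to a weighted Wirtinger inequality on half-periods of length $\pi$. There the symmetry ensures the zero-mean condition with respect to the cone-volume measure $h\,dS_K$. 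This is exactly where origin-symmetry enters; without it the statement fails.

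If the local route becomes too technical for the approximation, the fallback is the original global argument. Fix $\lambda$, set $Q=(1-\lambda)\cdot K+_0\lambda\cdot L$, and note that $h_Q\le h_K^{1-\lambda}h_L^{\lambda}$ with equality $S_Q$-a.e. Then
\begin{equation*}
\log V_2(Q) = \frac12\int\log h_Q\,d\overline{V}_Q\ \text{(normalized cone volume)},
\end{equation*}
and the inequality reduces to the planar log-Minkowski inequality $\int \log(h_L/h_Q)\,d\overline V_Q \ge \tfrac12\log(V_2(L)/V_2(Q))$ for origin-symmetric $Q,L$. That inequality can be proved by choosing, among the centrally symmetric bodies $L$, one that minimizes the left-hand side minus the right-hand side. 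One then shows that the minimizer must be a dilate of $Q$ or, when $Q$ is a parallelogram, a parallelogram with the same side directions. The argument uses an explicit perturbation of a pair of antipodal edges and the planar formula for the area of a polygon.

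For the equality case I would track where the inequalities are strict. In the local formulation, equality forces $\varphi$ to be a combination of $1$ and of eigenfunctions of eigenvalue exactly matching the threshold. The latter exist only when the cone-volume measure concentrates on two pairs of antipodal directions, that is, when $K$ is a parallelogram, and then $L$ must share its side directions. Otherwise $\varphi$ is constant and $K,L$ are dilates.
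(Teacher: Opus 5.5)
The paper does not prove this statement; it quotes it from B\"or\"oczky--Lutwak--Yang--Zhang. Your proposal therefore has to stand on its own, and as written its decisive step is missing. After your substitution $\varphi=h\psi$, the local statement becomes the weighted Poincar\'e inequality
\[
\int_{\S^1}h^2\psi'^2\,d\theta\ \ge\ 2\int_{\S^1}\psi^2\,h(h''+h)\,d\theta
\qquad\text{for $\pi$-periodic $\psi$ with }\int_{\S^1}\psi\,h(h''+h)\,d\theta=0 .
\]
The constant $2$ is exactly the content of the local planar inequality, and it is sharp, as parallelograms show. You only assert that a Sturm--Liouville/Wirtinger argument ``should'' give it, and generic tools do not. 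For $-(h^2\psi')'=\mu\,h(h''+h)\psi$, oscillation theory for Hill's equation only places the first nonconstant $\pi$-periodic eigenvalue strictly above the lowest antiperiodic one. That antiperiodic eigenvalue is $\mu=1$, with eigenfunctions $\cos\theta/h$ and $\sin\theta/h$, so this gives no more than the Brunn--Minkowski level. A weighted Wirtinger inequality with general weights has no universal constant, so the specific relation between $h^2$ and $h(h''+h)$ must be exploited, and you give no mechanism for that. (Symmetry is what yields $\pi$-periodicity and removes the linear modes; the mean-zero condition is just orthogonality to dilations.)

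The formulation is also off. Your displayed inequality fails for $\varphi=h_K$: its left side is $2V_2(K)-2V_2(K)=0$ and its right side is $4V_2(K)$. Concavity of $\log V_2$ along $h_Ke^{t\varphi/h_K}$ actually requires
\[
\frac{1}{V_2(K)}\Bigl(\int_{\S^1}\varphi\,dS_K\Bigr)^2\ \ge\ \int_{\S^1}\frac{\varphi^2}{h}\,dS_K+\int_{\S^1}\varphi(\varphi''+\varphi)\,d\theta .
\]
Your spectral restatement is vacuous. The operator $\varphi\mapsto h(\varphi''+\varphi)/(h''+h)$ is self-adjoint in $L^2(h^{-1}dS_K)$ and fixes $h$, and by Minkowski's inequality all its other eigenvalues are $\le 0$. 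What is needed is that its eigenvalues on even functions orthogonal to $h$ are $\le -1$.

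The surrounding steps have further problems. The path $h_K^{1-t}h_L^{t}$ is in general not a support function, even for $C^2_+$ bodies. Take the ellipse with semi-axes $3,1$ and its rotation by $\pi/2$: at $t=\frac12$ one gets $h+h''<0$ at $\theta=\pi/4$. So concavity along this path says nothing directly about the Wulff shapes. You would need the local-to-global theorem of Chen--Huang--Li--Liu or Putterman as a black box, and that yields only the inequality.

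The equality characterization cannot come out of this route. Equality does not survive $C^2_+$ approximation, and parallelograms are not $C^2_+$. Moreover, for $C^2_+$ bodies the cone-volume measure never concentrates on two antipodal pairs, so your last paragraph has nothing to act on.

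In the fallback, the identity $\log V_2(Q)=\frac12\int\log h_Q\,d\overline V_Q$ is false: a disk of radius $r$ gives $\log(\pi r^2)$ versus $\frac12\log r$. The correct reduction applies the log-Minkowski inequality to the pairs $(Q,K)$ and $(Q,L)$, adds them with weights $1-\lambda,\lambda$, and uses $h_Q=h_K^{1-\lambda}h_L^{\lambda}$ $S_Q$-a.e. More seriously, the Euler--Lagrange equation of your minimization only tells you that a minimizer $L$ satisfies $\overline V_L=\overline V_Q$. Concluding that $L$ is a dilate of $Q$ (or a parallelogram with the same side directions) is precisely uniqueness in the even planar log-Minkowski problem. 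That is the real difficulty, and naming an edge perturbation does not supply it. Existence of a minimizer also requires the subspace concentration property of $\overline V_Q$.
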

A stronger coordinatewise product inequality, which implies the conjecture for unconditional bodies, was proved by
Bollob\'as and Leader \cite{BollobasLeader1995} and independently by Cordero-Erausquin, Fradelizi and
Maurey \cite{CFM2004}.  Saroglou \cite{Saroglou2015} subsequently analyzed its
equality cases. B\"or\"oczky and Kalantzopoulos \cite{BoroczkyKalantzopoulos2022} proved
log-Brunn-Minkowski inequality for convex bodies invariant under $n$ linear
reflections whose fixed hyperplanes intersect only at the origin. Rotem \cite{Rotem2014} established the complex case. Xi and Leng \cite{XiLeng2016} obtained a nonsymmetric planar extension
for bodies put in dilation position after suitable translations.  Colesanti, Livshyts and Marsiglietti \cite{CLM2017} verified the log-Brunn-Minkowski inequality
locally for sufficiently small $C^2$ perturbations of the Euclidean ball.

Kolesnikov and Milman \cite{KolesnikovMilman2022} developed a new method to derive local Brunn-Minkowski inequality by studying the infinitesimal form of the conjecture.
They obtained local $L_p$-Brunn--Minkowski inequalities for $C^2_+$ origin-symmetric convex bodies when
$p\in[1-c n^{-3/2},1)$, together with local logarithmic results for several
important models, such as the Euclidean ball, unconditional bodies and $l_p$-balls.
Chen, Huang, Li, and Liu \cite{ChenHuangLiLiu2020} and Putterman \cite{Putterman2021}
gave a direct equivalence between the local and global formulations based on PDE methods and strongly isomorphic polytopes, respectively.
Van Handel \cite{vanHandel2023} proved the local logarithmic inequality for zonoids.
Iffland \cite{iffland2026local} proved the local logarithmic inequality for bodies of revolution by operator theory. More recently, Milman \cite{Milman2025} interpreted the conjecture
spectrally in centro-affine differential geometry and obtained global results
under curvature pinching assumptions.

This paper considers a different, codimension-two symmetry regime.
We assume origin-symmetry but require only $n-2$ common hyperplane
reflections whose normal vectors are pairwise orthogonal. 
First, we state the result in coordinates.

\begin{theorem}\label{thm:main}
Let $n\geq 3$, and let $K,L\subset\R^n$ be origin-symmetric convex bodies.
Suppose that both bodies are invariant under the coordinate reflections
\[
 (x_1,\ldots,x_i,\ldots,x_n)
 \longmapsto (x_1,\ldots,-x_i,\ldots,x_n),
 \qquad i=1,\ldots,n-2.
\]
Then, for every $0\leq\lambda\leq1$,
\begin{equation}\label{eq:main}
 V((1-\lambda)\cdot K+_0 \lambda \cdot L) \geq V(K)^{1-\lambda}V(L)^\lambda.
\end{equation}
Moreover, if $\lambda \in (0, 1)$ and either $K$ or $L$ belongs to $\mathcal{K}^2_+$, equality holds if and only if $K$ and $L$ are dilates.
\end{theorem}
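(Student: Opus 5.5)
Our plan is to reduce to the planar case, Theorem~\ref{thm:BLYZ}, through a Steiner-type symmetrization argument in the spirit of the unconditional case of Bollob\'as--Leader and Cordero-Erausquin--Fradelizi--Maurey. Write $x=(y,z)$ with $y\in\R^{n-2}$ and $z\in\R^2$. By the $n-2$ reflection symmetries, $K$ and $L$ are unconditional in the $y$-variables. By origin symmetry, each section $K(y)=\{z:(y,z)\in K\}$ satisfies $K(-y)=-K(y)$, and the reflection symmetries give $K(y)=K(|y|)$, where $|y|=(|y_1|,\dots,|y_{n-2}|)$. Hence every section is itself origin-symmetric in $\R^2$. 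Convexity gives $\tfrac12 K(y)+\tfrac12 K(-y)\subset K(0)$, but we need more: the log-structure of the sections.

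The first step is to restrict to $y$ in the open positive orthant and change variables $y_i=e^{s_i}$. The key inclusion we aim to prove is
\[
 \bigl((1-\lambda)\cdot K+_0\lambda\cdot L\bigr)\bigl(e^{(1-\lambda)s+\lambda t}\bigr)\supset (1-\lambda)\cdot K(e^s)+_0\lambda\cdot L(e^t),
\]
where the right-hand side is a planar $L_0$ combination and $e^s$ is taken coordinatewise. We will prove this with support functions. Let $M=(1-\lambda)\cdot K+_0\lambda\cdot L$. By unconditionality in $y$, the point $(e^{(1-\lambda)s+\lambda t},w)$ lies in $M$ exactly when, for all $(u,v)$ with $u\ge0$ componentwise,
\[
 u\cdot e^{(1-\lambda)s+\lambda t}+v\cdot w\le h_K(u,v)^{1-\lambda}h_L(u,v)^{\lambda}.
\]
Take $w$ in the planar combination, so that $v\cdot w\le h_{K(e^s)}(v)^{1-\lambda}h_{L(e^t)}(v)^\lambda$ (using origin symmetry of the sections, so both support functions are positive). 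We have $h_K(u,v)\ge u\cdot e^s+h_{K(e^s)}(v)$, and similarly for $L$. The desired inequality then follows from Hölder's inequality, $(a+b)^{1-\lambda}(c+d)^\lambda\ge a^{1-\lambda}c^\lambda+b^{1-\lambda}d^\lambda$, together with AM--GM applied coordinatewise: $e^{(1-\lambda)s_i+\lambda t_i}u_i\le (u_ie^{s_i})^{1-\lambda}(u_ie^{t_i})^\lambda$. The main delicacy in this step is that the $v$-supremum and the $u$-terms must combine correctly. Hölder for sums of $n-1$ terms handles this, and this is exactly where unconditionality in $y$ (which forces $u\ge0$ to suffice) and planar origin symmetry are used.

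The second step applies Theorem~\ref{thm:BLYZ} to the sections. This gives that $F(s)=e^{\sum s_i}V_2(M(e^s))$, with $G$ and $H$ defined analogously for $K$ and $L$, satisfies $F((1-\lambda)s+\lambda t)\ge G(s)^{1-\lambda}H(t)^\lambda$. Prékopa--Leindler then yields $\int F\ge(\int G)^{1-\lambda}(\int H)^\lambda$. Since $V(K)=2^{n-2}\int_{\R^{n-2}}G(s)\,ds$, inequality \eqref{eq:main} follows; the cases $\lambda\in\{0,1\}$ are trivial.

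The main obstacle will be the equality case. With equality, equality must hold in Prékopa--Leindler, so $G$ and $H$ are translates up to scaling, i.e. log-concave with $H(t)=cG(t-a)$. Equality must also hold in the planar inequality for almost every pair of corresponding sections, so each pair consists of dilates or of parallelograms with parallel sides. We will use the $\mathcal K^2_+$ hypothesis to exclude parallelograms for sections of that body near the center, since smooth bodies of positive curvature have strictly convex sections. This forces $L(e^{s+a})=\mu(s)K(e^s)$. We then need to show that $a=0$ and that $\mu$ is constant; this should follow from equality in the H\"older/AM--GM step (which forces $e^{s}=e^{t}$ in directions where $u>0$ matters) combined with the support-function identity. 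Finally we will pass to dilation of the whole bodies, possibly via the uniqueness argument for the log-Minkowski problem indicated in the abstract.
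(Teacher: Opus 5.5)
Your proof of the inequality is the paper's proof. It uses the fiber inclusion via $h_K(u,v)=h_K(|u|,v)$, $h_K(u,v)\ge\langle u,e^s\rangle+h_{K(e^s)}(v)$ and H\"older (the paper's Lemma 3.2), then Theorem~\ref{thm:BLYZ} on fibers and Pr\'ekopa--Leindler for $F,G,H$ in logarithmic coordinates on $\R^{n-2}$. Fibers of zero area should be handled separately, since the planar $L_0$-sum needs positive support functions. The opening of your equality argument also matches the paper: Dubuc's characterization, planar equality on corresponding fibers, and excluding parallelograms via $\mathcal{K}^2_+$. The last step of the equality case, however, has a genuine gap.

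\emph{Constancy of $\mu$.} This is free. Dubuc's relation $H(s+a)=cG(s)$ reads $V_2(L(e^{s+a}))=c\,e^{-\langle a,\mathbf 1\rangle}V_2(K(e^s))$, so $\mu^2=c\,e^{-\langle a,\mathbf 1\rangle}$. By Fubini and the reflections, $L=TK$ with $T=\operatorname{diag}(e^{a_1},\dots,e^{a_{n-2}},\mu,\mu)$.

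\emph{Wrong target.} Your goal $a=0$ is false. For $L=cK$ one has $a=(\log c)\mathbf 1$. Moreover, $a=0$ with $\mu$ constant would only give $L=\operatorname{diag}(I_{n-2},\mu I_2)K$, which is not a dilate. Likewise, equality in your H\"older step forces proportionality $e^{t}=\kappa e^{s}$, $h_{L(e^t)}=\kappa h_{K(e^s)}$, not $e^s=e^t$. The correct goal is $e^{a_i}=\mu$ for all $i$.

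\emph{Missing mechanism.} This is the main gap: you give no reason why equality must hold in H\"older anywhere. Volume equality only yields that fibers of $M$ coincide a.e.\ with the $L_0$-sums of corresponding fibers. That does not make any particular support-function inequality tight. The paper closes this as follows.
\begin{enumerate}
\item The fiber inclusion gives $T^\lambda K\subset M$, and $V(M)=V(K)^{1-\lambda}V(TK)^\lambda=V(T^\lambda K)$, so $M=T^\lambda K$.
\item Pick $q$ with all $q_i\neq0$ and $q_P\neq0$, and let $z$ be the unique support point of $K$ in direction $q$. The reflections plus smoothness force $z_iq_i>0$ and $\langle z_P,q_P\rangle>0$.
\item At the regular point $p=T^\lambda z$ of $M$, with normal $u=T^{-\lambda}q$, the Wulff property gives $\langle p,u\rangle=h_K(u)^{1-\lambda}h_K(Tu)^\lambda$.
\item On the other hand, $\langle p,u\rangle=\sum_j t_j^\lambda A_j$ with $A_j>0$, $\sum_j A_j\le h_K(u)$ and $\sum_j t_jA_j\le h_K(Tu)$. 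So H\"older is strict unless all $t_j$ coincide.
\end{enumerate}

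\emph{Circular fallback.} Your fallback via uniqueness in the log-Minkowski problem is circular. That uniqueness is deduced from this equality case, not the other way around.
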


The coordinate formulation immediately yields the following intrinsic
version.

\begin{theorem}\label{thm:orthogonal-reflections}
Let $n\geq 3$, let $v_1,\ldots,v_{n-2} \in \S^{n-1}$ be pairwise orthogonal,
and let
\[
 \rho_i x=x-2\langle x,v_i\rangle v_i
\]
be reflection in $v_i^\perp$.  If $K,L\subset\R^n$ are origin-symmetric convex
bodies satisfying $\rho_iK=K$ and $\rho_iL=L$ for every $i$, then, for every
$\lambda\in[0,1]$,
\[
 V((1-\lambda)\cdot K+_0 \lambda \cdot L) \geq V(K)^{1-\lambda}V(L)^\lambda.
\]
Moreover, if $\lambda \in (0, 1)$ and either $K$ or $L$ belongs to $\mathcal{K}^2_+$, equality holds if and only if $K$ and $L$ are dilates.
\end{theorem}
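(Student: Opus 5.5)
The plan is to reduce \Cref{thm:orthogonal-reflections} to \Cref{thm:main} by an orthogonal change of coordinates, and to check that every ingredient of the statement is invariant under such a change. Since $v_1,\ldots,v_{n-2}$ are pairwise orthogonal unit vectors, we complete them to an orthonormal basis $v_1,\ldots,v_n$ of $\R^n$. Let $A\in O(n)$ be the orthogonal map with $Av_i=e_i$ for $i=1,\ldots,n$. Then we set $K'=AK$ and $L'=AL$.

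The first step is to verify the hypotheses of \Cref{thm:main} for $K'$ and $L'$. Origin-symmetry is clearly preserved by a linear map. For the reflections, we compute
\[
A\rho_iA^{-1}y=y-2\langle A^{-1}y,v_i\rangle e_i=y-2\langle y,e_i\rangle e_i .
\]
So $A\rho_iA^{-1}$ is exactly the coordinate reflection in the $i$-th coordinate, and $\rho_iK=K$ gives $\sigma_iK'=K'$, where $\sigma_i$ denotes that coordinate reflection. The same holds for $L'$, for each $i=1,\ldots,n-2$.

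The second step is to check that the log-Minkowski combination commutes with orthogonal maps. Support functions transform by $h_{AK}(x)=h_K(A^{-1}x)$. Substituting $x=Au$ in the defining intersection over $\S^{n-1}$, which $A$ permutes, gives
\[
A\bigl((1-\lambda)\cdot K+_0\lambda\cdot L\bigr)=(1-\lambda)\cdot K'+_0\lambda\cdot L'.
\]
Since $|\det A|=1$, volumes are preserved on both sides of the inequality. Therefore \eqref{eq:main} for $K',L'$ is equivalent to the desired inequality for $K,L$.

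The third step handles the equality case. Membership in $\mathcal{K}^2_+$ is invariant under rigid motions: $C^2$ smoothness of the boundary and positivity of the Gauss curvature are both preserved. Being dilates is also preserved, because $AK=cAL$ if and only if $K=cL$. So the equality characterization in \Cref{thm:main} transfers directly.

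There is no real obstacle in this reduction. The only point needing care is the second step, namely that the intersection defining $+_0$ is taken over all of $\S^{n-1}$, which is invariant under $A$. All of the substance lies in \Cref{thm:main} itself.
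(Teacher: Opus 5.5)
Your proposal is correct and follows the paper's proof. You conjugate by an orthogonal map sending $v_i$ to $e_i$, so that each $\rho_i$ becomes the $i$-th coordinate reflection, and then use $h_{AK}=h_K\circ A^{T}$, the resulting commutation of the $L_0$-sum with $A$, and volume preservation to transfer \Cref{thm:main}; your explicit check that membership in $\mathcal{K}^2_+$ and being dilates are preserved under $A$ spells out the equality-case transfer that the paper leaves implicit.
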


\begin{remark}
    Let $\mathcal{K}^2_+$ denote the class of convex bodies with $C^2$ boundary and strictly positive curvature,
    and $\mathcal{K}^2_{+,e}$ denote the subset of origin-symmetric bodies in $\mathcal{K}^2_+$.

\end{remark}

Our first geometric application concerns bodies of revolution.  Two axes,
whether coincident or distinct, admit $n-2$ pairwise orthogonal reflection normals. Hence, we have the following corollary.
\begin{corollary}\label{cor:revolution}
Let $n\geq3$, $a,b \in \S^{n-1}$, $K,L\subset\R^n$ be origin-symmetric convex bodies of
revolution about the axes $\R a$ and $\R b$ respectively.  Then, for every $\lambda\in[0,1]$,
\[
 \vol\bigl((1 - \lambda) \cdot K +_0 \lambda \cdot L)
 \geq \vol(K)^{1-\lambda}\vol(L)^{\lambda}.
\]
\end{corollary}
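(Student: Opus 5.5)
The plan is to reduce the corollary to Theorem~\ref{thm:orthogonal-reflections}. The task is to exhibit $n-2$ pairwise orthogonal unit vectors $v_1,\ldots,v_{n-2}$ such that reflection in each $v_i^\perp$ leaves both $K$ and $L$ invariant.

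The first step is to record what invariance a body of revolution has. If $K$ is a body of revolution about $\R a$, then $K$ is invariant under every orthogonal map fixing $a$. In particular, it is invariant under the reflection $\rho_v x = x-2\langle x,v\rangle v$ for every unit vector $v\perp a$, because such a $\rho_v$ is orthogonal and fixes $a$. Similarly, $L$ is invariant under $\rho_v$ for every unit $v\perp b$. Hence any unit $v\in\{a,b\}^\perp$ gives a reflection common to both bodies. Both bodies are origin-symmetric by hypothesis, which is the remaining assumption of Theorem~\ref{thm:orthogonal-reflections}.

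The second step is a dimension count. Set $W=\{a,b\}^\perp=\operatorname{span}(a,b)^\perp$. Since $\dim\operatorname{span}(a,b)\le 2$, we have $\dim W\ge n-2\ge 1$. We choose an orthonormal system $v_1,\ldots,v_{n-2}$ inside $W$, for instance the first $n-2$ vectors of an orthonormal basis of $W$. There are two cases.
\begin{itemize}
\item If $a=\pm b$, then $\dim W=n-1$, and we simply take $n-2$ of the basis vectors.
\item If $a\ne \pm b$, then $\dim W=n-2$, and we take the whole basis.
\end{itemize}
In either case every $\rho_{v_i}$ fixes both $a$ and $b$, so $\rho_{v_i}K=K$ and $\rho_{v_i}L=L$.

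Finally, we apply Theorem~\ref{thm:orthogonal-reflections} with these $v_i$, which yields the inequality for all $\lambda\in[0,1]$.

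The only point needing care is to justify that rotational symmetry about $\R a$ includes the reflections $\rho_v$ for $v\perp a$, i.e.\ full $O(n-1)$-invariance on $a^\perp$ and not just $SO(n-1)$-invariance. If "body of revolution" is taken to mean only $SO(n-1)$-invariance, we can handle this for $n\ge3$ as follows. Each section of $K$ by a hyperplane $\{x:\langle x,a\rangle=t\}$ is $SO(n-1)$-invariant about the point $ta$. Since $n-1\ge2$, $SO(n-1)$ acts transitively on the spheres of $a^\perp$, so each such section is a Euclidean ball centered at $ta$. A ball is invariant under the reflections $\rho_v$, so $K$ is $\rho_v$-invariant. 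Beyond this point, the corollary is a direct consequence of Theorem~\ref{thm:orthogonal-reflections}.
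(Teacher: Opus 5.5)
Your proposal is correct and follows the paper's proof. Like the paper, you choose $n-2$ orthonormal vectors in $(\operatorname{span}\{a,b\})^\perp$, observe that the corresponding reflections fix both axes and therefore preserve $K$ and $L$, and then apply Theorem~\ref{thm:orthogonal-reflections}. You also supply an argument the paper omits: because $SO(n-1)$ acts transitively on spheres in $a^\perp$ when $n\ge3$, invariance under $SO(n-1)$ already forces invariance under these reflections, whereas the paper simply asserts that the symmetry group of $K$ contains all orthogonal maps fixing the axis.
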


\begin{remark}\label{rem:comparison}
A body $K \subset \R^n$ is defined as a body of revolution if it is invariant under all rotations around a fixed axis. If the two axes above coincide or are orthogonal, the $n$-reflection theorem of
B\"or\"oczky and Kalantzopoulos already applies
\cite[Theorem~2]{BoroczkyKalantzopoulos2022}.  The genuinely additional case
in \Cref{cor:revolution} is that of axes which are neither parallel nor
orthogonal.
\end{remark}

The log-Brunn-Minkowski inequality is closely related to the uniqueness of solutions
to the logarithmic Minkowski problem. The logarithmic Minkowski problem itself predates the formulation of
the log-Brunn-Minkowski inequality. Firey first posed the corresponding equation in his study of
worn stones \cite{Firey1974}. Given a finite Borel measure $\mu$ on $\mathbb{S}^{n-1}$, find a convex body $K$ such that
\begin{equation*}
V_K(\omega) = \mu(\omega), \quad \forall \text{ Borel set } \omega \subseteq \mathbb{S}^{n-1}.
\end{equation*}
The problem of prescribing $V_K$ is the logarithmic Minkowski problem.
When $\mu = \frac{1}{n} f\mathcal{H}^{n-1}$, the log-Minkowski problem is reduced to solving the following Monge-Amp\`ere equation:
\begin{equation*}
h_K \det(\nabla^2 h_K + h_K I) = f, 
\end{equation*}
where $\nabla^2 h_K$ denotes the Hessian matrix of $h_K$, and $I$ is the identity matrix.
B\"or\"oczky, Lutwak, Yang and Zhang \cite{BLYZ2013} characterized the existence of finite nonzero even
cone volume measures by the subspace concentration condition.  The uniqueness question is closely tied, through the first variation of volume,
to equality of log-Brunn-Minkowski inequality.

\begin{theorem}
\label{thm:uniqueness}
Let $n \ge 3$. For any even positive function $f \in C(\S^{n-1})$ that is invariant under the coordinate reflections
\[ (x_1, \dots, x_i, \dots, x_n) \longmapsto (x_1, \dots, -x_i, \dots, x_n), \quad i = 1, \dots, n-2, \]
there is at most one convex body $K \in \mathcal{K}^2_{+,e}$ with same coordinate reflection symmetry satisfying 
\[ V_K(\cdot) = (f \mathcal{H}^{n-1})(\cdot). \]
\end{theorem}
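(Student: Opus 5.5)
The plan is to derive uniqueness from the equality case of \Cref{thm:main} through the standard log-Minkowski inequality. Suppose $K,L\in\mathcal{K}^2_{+,e}$ both have the stated reflection symmetries and $V_K=V_L=f\mathcal{H}^{n-1}$. Integrating the constant function gives $V(K)=\frac1n\int h_K\,dS_K=V_K(\S^{n-1})=V_L(\S^{n-1})=V(L)$. So it suffices to show that $K$ and $L$ are dilates, since dilates with equal volume coincide.

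The first step is to turn \eqref{eq:main} into a log-Minkowski inequality. Set $Q_t=(1-t)\cdot K+_0 t\cdot L$ for $t\in[0,1]$. By \eqref{eq:main}, $\log V(Q_t)\ge (1-t)\log V(K)+t\log V(L)$, with equality at $t=0$. Hence the right derivative at $t=0$ satisfies $\frac{d}{dt}\big|_{t=0^+}\log V(Q_t)\ge \log V(L)-\log V(K)$. The support function of $Q_t$ is bounded above by $h_K^{1-t}h_L^{t}=h_K\exp(t\log(h_L/h_K))$, and its Wulff shape is $Q_t$. The Aleksandrov variational lemma, applied as in \cite{BLYZ2012}, gives
\[
\frac{d}{dt}\Big|_{t=0}V(Q_t)=\int_{\S^{n-1}}\log\frac{h_L}{h_K}\,h_K\,dS_K=n\int_{\S^{n-1}}\log\frac{h_L}{h_K}\,dV_K .
\]
Dividing by $V(K)$ yields the log-Minkowski inequality
\[
\frac{1}{V(K)}\int_{\S^{n-1}}\log\frac{h_L}{h_K}\,dV_K\ \ge\ \frac1n\log\frac{V(L)}{V(K)}.
\]

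Now use the hypothesis $V_K=V_L$ and $V(K)=V(L)$. Applying the inequality with the roles of $K$ and $L$ swapped, and adding the two, gives
\[
\int\log\frac{h_L}{h_K}\,dV_K+\int\log\frac{h_K}{h_L}\,dV_L\ \ge 0 .
\]
Since $V_K=V_L$, the left side is identically $0$. Therefore each inequality is an equality, so $\int\log(h_L/h_K)\,dV_K=0=\frac1n\log(V(L)/V(K))$.

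The main obstacle is passing from equality in this derivative inequality to equality in \eqref{eq:main}. The function $\phi(t)=\log V(Q_t)-[(1-t)\log V(K)+t\log V(L)]$ is nonnegative on $[0,1]$ and vanishes at both endpoints, and we now know $\phi'(0^+)=0$. That alone does not force $\phi\equiv0$, so I would instead use concavity of $t\mapsto\log V(Q_t)$. Concavity follows from \eqref{eq:main} applied between $Q_s$ and $Q_u$: by the Wulff-shape definition, $(1-\theta)\cdot Q_s+_0\theta\cdot Q_u\supseteq Q_{(1-\theta)s+\theta u}$ holds when $h_{Q_s}=h_K^{1-s}h_L^s$, which is the case for $\mathcal{K}^2_+$ bodies and small $s$. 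Careful tracking of this containment is needed in general.

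I expect the cleaner route is the standard one in \cite{BLYZ2012}. Replace $L$ by $Q_{1/2}$, or argue directly: the log-Minkowski inequality with equality, applied to the pair $(K,Q_t)$, is the first-order condition. Then invoke the equality characterization in \Cref{thm:main}, which applies because $K\in\mathcal{K}^2_+$. So I would show that equality in the log-Minkowski inequality forces equality in \eqref{eq:main} at some $\lambda\in(0,1)$. Concretely, set $Q=Q_{1/2}$, which lies in the symmetry class because the $L_0$ sum preserves the reflections. Using $h_Q\le h_K^{1/2}h_L^{1/2}$ and $V_Q$-a.e.\ equality, the log-Minkowski inequality for the pair $(Q,K)$ and for the pair $(Q,L)$ gives
\[
\log V(K)+\log V(L)-2\log V(Q)\ \ge\ \frac{n}{V(Q)}\int\log\frac{h_Kh_L}{h_Q^2}\,dV_Q\ \ge 0 .
\]
Combined with \eqref{eq:main} at $\lambda=1/2$, this yields equality there. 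Hence $K$ and $L$ are dilates, and since they have equal volume, $K=L$.
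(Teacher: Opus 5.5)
There is a genuine gap in your final step: the inequality you obtain from the pairs $(Q,K)$ and $(Q,L)$ points the wrong way. The log-Minkowski inequality you derived, applied to these two pairs and added, actually gives
\[
\frac{n}{V(Q)}\int_{\S^{n-1}}\log\frac{h_Kh_L}{h_Q^2}\,dV_Q\ \ge\ \log V(K)+\log V(L)-2\log V(Q).
\]
So the integral bounds the volume defect from \emph{above}, not from below as you wrote. The left side vanishes, because $h_Q=(h_Kh_L)^{1/2}$ holds $S_Q$-a.e. All you get is therefore $V(Q)^2\ge V(K)V(L)$, which is just \eqref{eq:main} at $\lambda=1/2$ again.

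Your chain as written also fails a sanity check. It never uses $V_K=V_L$, nor the identity $\int\log(h_L/h_K)\,dV_K=0$ you had just established. It would therefore give equality in \eqref{eq:main} for every pair in the class, and by the equality case force any two $\mathcal{K}^2_+$ bodies in the class to be dilates.

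The missing ingredient is the route you set aside: concavity of $t\mapsto\log V(Q_t)$. You wrote the containment in the wrong direction, and the correct one needs no extra hypothesis.
\begin{itemize}
\item Since $h_{Q_s}\le h_K^{1-s}h_L^s$ and $h_{Q_u}\le h_K^{1-u}h_L^u$ everywhere, one has $h_{Q_s}^{1-\theta}h_{Q_u}^\theta\le h_K^{1-r}h_L^r$ with $r=(1-\theta)s+\theta u$.
\item By monotonicity of Wulff shapes, $(1-\theta)\cdot Q_s+_0\theta\cdot Q_u\subseteq Q_r$.
\item Because $Q_s$ and $Q_u$ are origin-symmetric and inherit the reflections, \eqref{eq:main} gives $V(Q_r)\ge V(Q_s)^{1-\theta}V(Q_u)^\theta$.
\end{itemize}
Your $\phi$ is then concave and nonnegative, with $\phi(0)=0$ and $\phi'(0^+)=0$. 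Hence $\phi(t)\le\phi(0)+\phi'(0^+)t=0$, so $\phi\equiv0$ and equality holds in \eqref{eq:main} for every $\lambda\in(0,1)$. The $\mathcal{K}^2_+$ equality case then yields $L=cK$, and equal volumes give $c=1$.

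With this replacement your argument is exactly the paper's proof. Your first two steps, the first-variation computation and the symmetrization giving $\int\log(h_L/h_K)\,dV_K=0$, already match it.
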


\begin{remark}
\Cref{thm:uniqueness} establishes uniqueness only within the stated $\mathcal{K}_{+,e}^2$ class. Existence in this
class under the present symmetry assumptions is not established here, so “at most one” cannot be
replaced by “a unique”.
\end{remark}

The log-Brunn-Minkowski inequality is also highly relevant to the inequality for Gaussian and even log-concave measures.
Saroglou\cite{Saroglou2016} proved that the log-Brunn-Minkowski inequality for origin-symmetric convex bodies implies the
corresponding inequality for even log-concave measure.  B\"or\"oczky and Kalantzopoulos\cite{BoroczkyKalantzopoulos2022} observed
that the argument remains in their reflection class when the potential is rotationally invariant. 
Via the method of Saroglou \cite{Saroglou2016}, we obtain the analogue of
\Cref{thm:orthogonal-reflections} for the Gaussian measure $\gamma$ where $d\gamma(x) = \frac{1}{(2\pi)^{n/2}} \exp(-\frac{|x|^2}{2}) \, dx$.

\begin{theorem}
\label{thm:gaussian}
Under the assumptions of \Cref{thm:orthogonal-reflections}, 
\begin{equation}\label{eq:gaussian-log-bmi}
 \gamma\left((1 - \lambda) \cdot K +_0 \lambda \cdot L\right)
 \geq
 \gamma(K)^{1-\lambda}\gamma(L)^\lambda,
 \qquad 0\leq\lambda\leq1.
\end{equation}
\end{theorem}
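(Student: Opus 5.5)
We follow Saroglou's method \cite{Saroglou2016}: reduce the Gaussian statement to the volume statement of \Cref{thm:orthogonal-reflections} through a layer-cake representation. The key point is that this reduction only produces bodies of the form $K\cap rB^n$ and their $L_0$-combinations, and these remain in the symmetry class.

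Write $M_\lambda=(1-\lambda)\cdot K+_0\lambda\cdot L$. Since $e^{-|x|^2/2}=\int_{|x|}^\infty te^{-t^2/2}\,dt$, Fubini gives
\[
 \gamma(A)=(2\pi)^{-n/2}\int_0^\infty \vol(A\cap tB^n)\,te^{-t^2/2}\,dt
\]
for every Borel set $A$. Here $B^n$ is the Euclidean unit ball. The plan has three steps.

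First, we check that $K_t:=K\cap tB^n$ and $L_s:=L\cap sB^n$ are origin-symmetric convex bodies invariant under every $\rho_i$. This holds because $B^n$ is invariant under the orthogonal maps $\rho_i$ and $-I$. Hence \Cref{thm:orthogonal-reflections} applies to the pair $(K_t,L_s)$ for all $t,s>0$.

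Second, we prove the inclusion
\[
 (1-\lambda)\cdot K_t+_0\lambda\cdot L_s\subseteq M_\lambda\cap t^{1-\lambda}s^{\lambda}B^n .
\]
We have $h_{K_t}\le\min(h_K,t)$ and $h_{L_s}\le\min(h_L,s)$. It follows that
\[
 h_{K_t}^{1-\lambda}h_{L_s}^{\lambda}\le h_K^{1-\lambda}h_L^{\lambda}
 \quad\text{and}\quad
 h_{K_t}^{1-\lambda}h_{L_s}^{\lambda}\le t^{1-\lambda}s^\lambda .
\]
The Wulff shape defined by the left-hand side is therefore contained in both $M_\lambda$ and $t^{1-\lambda}s^\lambda B^n$. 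Combining this inclusion with \Cref{thm:orthogonal-reflections} gives
\[
 F(t^{1-\lambda}s^\lambda)\ge G(t)^{1-\lambda}H(s)^\lambda ,
\]
where we set
\[
 F(r)=\vol(M_\lambda\cap rB^n)\,re^{-r^2/2},\qquad
 G(t)=\vol(K_t)\,te^{-t^2/2},\qquad
 H(s)=\vol(L_s)\,se^{-s^2/2}.
\]
To verify this, note first that the factor $te^{-t^2/2}$ is log-concave in $\log t$ in the multiplicative sense. More precisely, we need
\[
 (t^{1-\lambda}s^\lambda)\,e^{-t^{2(1-\lambda)}s^{2\lambda}/2}\ \ge\ \bigl(te^{-t^2/2}\bigr)^{1-\lambda}\bigl(se^{-s^2/2}\bigr)^{\lambda}.
\]
The power factors match exactly. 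The exponential factors compare correctly because $t^{2(1-\lambda)}s^{2\lambda}\le(1-\lambda)t^2+\lambda s^2$ by AM--GM.

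Third, we change variables $t=e^u$ and apply the one-dimensional Prékopa--Leindler inequality to the functions
\[
 u\mapsto F(e^u)e^u,\qquad u\mapsto G(e^u)e^u,\qquad u\mapsto H(e^u)e^u .
\]
The extra factor $e^u$ from $dt=e^u\,du$ is multiplicative and compatible with the geometric mean, since $e^{(1-\lambda)u+\lambda v}=(e^u)^{1-\lambda}(e^v)^\lambda$. Prékopa--Leindler then gives
\[
 \int_0^\infty F\ \ge\ \Bigl(\int_0^\infty G\Bigr)^{1-\lambda}\Bigl(\int_0^\infty H\Bigr)^{\lambda},
\]
which is exactly \eqref{eq:gaussian-log-bmi} after multiplying by $(2\pi)^{-n/2}$.

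The main obstacle is the second step. One must verify that the $L_0$-combination interacts correctly with truncation by balls. This comes down to two facts: monotonicity of the Wulff-shape construction in the support data, and the fact that $h_{K\cap tB}\le\min(h_K,t)$. One must also confirm that every body produced stays in the symmetry class, so that \Cref{thm:orthogonal-reflections} is applicable. A secondary point concerns the endpoints $\lambda\in\{0,1\}$ and measurability of $F$, $G$, $H$. The endpoints are trivial. $G$ and $H$ are continuous, and $F$ is continuous as well because $M_\lambda$ is a convex body.
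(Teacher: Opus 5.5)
Your proof is correct and follows the same Saroglou-type transfer as the paper, with a different parametrization of the truncations. The paper obtains \Cref{thm:gaussian} by specializing \Cref{thm:invariant-measure} to $\varphi(x)=|x|^2/2$. In both arguments you truncate by sublevel sets of the potential, apply \Cref{thm:orthogonal-reflections} to the truncated bodies, and finish with a layer-cake formula and one-dimensional Pr\'ekopa--Leindler.

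The difference lies in the key inclusion.
\begin{itemize}
\item The paper interpolates the level $q$ arithmetically. For the Gaussian this means showing that $(1-\lambda)\cdot(K\cap tB^n)+_0\lambda\cdot(L\cap sB^n)$ lies in the ball of radius $\sqrt{(1-\lambda)t^2+\lambda s^2}$. It gets this by applying AM--GM to support functions, which places the $L_0$-combination inside the Minkowski combination $(1-\lambda)(K\cap tB^n)+\lambda(L\cap sB^n)$, and then using convexity of the potential. The weight $e^{-q}$ is then exactly log-linear.
\item You interpolate the radii geometrically. You get the sharper inclusion into $t^{1-\lambda}s^{\lambda}B^n$ directly from monotonicity of Wulff shapes. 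The AM--GM step moves into the weight, as log-concavity of $u\mapsto e^{u}e^{-e^{2u}/2}$.
\end{itemize}

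Your route is slightly cleaner for the Gaussian. Every truncation $K\cap tB^n$ with $t>0$ is a genuine origin-symmetric convex body, so the degenerate zero-volume case, which the paper handles separately, never arises. However, your route relies on the radial weight being log-concave in the logarithmic variable. That holds for the Gaussian but fails for some radial log-concave densities, for instance when $\psi'$ jumps. The paper's arithmetic parametrization uses only convexity of the potential, so it yields the inequality for every even, $\rho_i$-invariant log-concave measure at once.
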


\begin{remark}
  Actually, \Cref{thm:gaussian} holds for any log-concave measure with rotationally
symmetric density in place of the Gaussian density (see \Cref{thm:invariant-measure} and \Cref{prop:radial} in \Cref{rim}).
\end{remark}

Concerning the organization of the paper, after some preparation in \Cref{pre} and \Cref{cdfi}, we prove
our main results, \Cref{thm:main}, \Cref{thm:orthogonal-reflections} and their corollary in \Cref{lbmi}.
In \Cref{lmp}, we prove \Cref{thm:uniqueness}. \Cref{thm:gaussian}, and its more general versions are discussed in \Cref{rim}.

\section{Preliminaries}
\label{pre}
In this section, we collect some known facts about convex bodies. Good general references for the theory of convex bodies are provided by the books of Gardner \cite{Gardner2006GeometricTomography}, Gruber \cite{Gruber2007ConvexDiscreteGeometry}, Leichtweiss \cite{Leichtweiss1998AffineGeometry}, Schneider \cite{Schneider2014}, and Thompson \cite{Thompson1996MinkowskiGeometry}.

In the Euclidean space $\mathbb{R}^n$, we denote the standard inner product by $\langle \cdot, \cdot \rangle$, the Euclidean norm by $|\cdot|$, the $n$-dimensional volume by $V(\cdot)$, the $2$-dimensional volume/area by $V_2(\cdot)$, and the $k$-dimensional Hausdorff measure by $\mathcal{H}^k$. A convex body is a compact convex subset with non-empty interior. In this paper, unless otherwise stated, all convex bodies are assumed to contain the origin in their interior. We write $M|E$ to denote the orthogonal projection of a compact convex set $M$ onto a linear subspace $E$ in $\mathbb{R}^n$.

The \textit{support function} $h_K : \mathbb{R}^n \to \mathbb{R}$ of a compact convex set $K$ in $\mathbb{R}^n$ is defined, for $x \in \mathbb{R}^n$, by
\[
h_K(x) = \max \{ \langle x, y \rangle : y \in K \}.
\]
Note that support functions are positively homogeneous of degree one and subadditive. A vector $u \in \mathbb{R}^n \setminus \{o\}$ is an outer normal vector at a boundary point $x \in \partial K$ if
\[
\langle x, u \rangle = h_K(u),
\]
and it is called a unit outer normal if $u \in \S^{n-1}$. A boundary point is said to be \textit{regular} if it has only one unit normal vector, and is said to be \textit{singular} if it has more than one unit normal vector. It is well known that the set of all singular boundary points of a convex body has $\mathcal{H}^{n-1}$-measure equal to 0.

Let $K$ be a convex body in $\mathbb{R}^n$ and $\nu_K : \partial K \to S^{n-1}$ denote the generalized Gauss map. Since for $\mathcal{H}^{n-1}$-almost every point on $\partial K$ it can be defined as a single-valued map into $S^{n-1}$, for each Borel subset $\omega \subset S^{n-1}$, the \textit{inverse spherical image} of $\omega$, denoted $\nu_K^{-1}(\omega)$, consists of all boundary points of $K$ possessing an outer unit normal vector that falls in $\omega$.
Given $p \in \mathbb{R}$, $S_p(K, \cdot)$, the $L_p$ \textit{surface area measure} of $K$, is a Borel measure on the unit sphere $\mathbb{S}^{n-1}$, given by
\[
    S_p(K,\omega) = \int\limits_{y\in\nu_K^{-1}(\omega)} (y \cdot \nu_K)^{1-p} d\mathcal{H}^{n-1}(y), \quad \text{for Borel set } \omega \subseteq \mathbb{S}^{n-1}.
\]
Two important cases are $S_1(K, \cdot)$ and $\frac{1}{n}S_0(K, \cdot)$, the former is called \textit{surface area measure} of $K$ (also denoted by $S_K$) and the latter is known as the \textit{cone volume measure} of $K$ (also denoted by $V_K$).

We recall that for $\lambda \in (0, 1)$, the $L_0$-sum of two convex bodies $K$ and $L$ in $\mathbb{R}^n$ is the Wulff shape
\begin{align*}
(1 - \lambda) \cdot K +_0 \lambda \cdot L &= \left\{ x \in \mathbb{R}^n : \langle x, u \rangle \le h_K(u)^{1-\lambda}h_L(u)^\lambda \ \forall u \in S^{n-1} \right\} \\
&= \left\{ x \in \mathbb{R}^n : \langle x, u \rangle \le h_K(u)^{1-\lambda}h_L(u)^\lambda \ \forall u \in \mathbb{R}^n \right\}.
\end{align*}
It is well-known that for any $u \in \mathbb{R}^n \setminus \{o\}$, if  $u$ is the outer normal at regular point $ z \in \partial ((1 - \lambda) \cdot K +_0 \lambda \cdot L)$,  then $ h_{(1-\lambda)\cdot K +_0 \lambda \cdot L}(u) = h_K(u)^{1-\lambda}h_L(u)^\lambda$\cite{BoroczkyKalantzopoulos2022}.

We note that the logarithmic sum is linear covariant. If $\Phi \in GL(n, \mathbb{R})$, then
\begin{equation} \label{eq:10}
\Phi[(1 - \lambda) \cdot K +_0 \lambda \cdot L] = (1 - \lambda) \cdot \Phi(K) +_0 \lambda \cdot \Phi(L).
\end{equation}
This is based on the fact that $h_{\Phi K}(u) = h_K(\Phi^t u)$. Therefore, if $K$ and $L$ are two convex bodies in $\mathbb{R}^n$ invariant under some subgroup $G \subset GL(n)$, then $(1-\lambda)\cdot K +_0 \lambda \cdot L$ is also invariant under $G$.

Suppose that the function $k_t(u) = k(t, u) : I \times S^{n-1} \to (0, \infty)$ is continuous, where $I \subset \mathbb{R}$
is an interval. For fixed $t \in I$, let
\[
K_t = \bigcap_{u \in S^{n-1}} \{x \in \mathbb{R}^n : x \cdot u \le k(t, u)\}
\]
be the Wulff shape (or Aleksandrov body) associated with the function $k_t$. It is well-known that
\begin{equation}
h_{K_t} \le k_t \quad \text{and} \quad h_{K_t} = k_t, \quad \text{a.e. w.r.t. } S_{K_t}, 
\end{equation}
for each $t \in I$. If $k_t$ is the support function of a convex body, then $h_{K_t} = k_t$, everywhere.
The following variant of Aleksandrov's Lemma \cite{Alexandrov1996SelectedWorks, HaberlLutwakYangZhang2010EvenOrlicz, Schneider2014} will be needed.

\begin{lemma}\label{lem:variation}
    Suppose $k(t, u) : I \times S^{n-1} \to (0, \infty)$ is continuous, where $I \subset \mathbb{R}$ is an open
interval. Suppose also that the convergence in
\[
\frac{\partial k(t, u)}{\partial t} = \lim_{s \to 0} \frac{k(t + s, u) - k(t, u)}{s}
\]
is uniform on $S^{n-1}$. If $\{K_t\}_{t \in I}$ is the family of Wulff shapes associated with $k_t$, then
\[
\frac{d V(K_t)}{d t} = \int_{S^{n-1}} \frac{\partial k(t, u)}{\partial t} dS_{K_t}(u).
\]
\end{lemma}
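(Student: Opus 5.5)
This is Aleksandrov's variational lemma, and I will prove it along the classical route, as in Huang–Lutwak–Yang–Zhang. The idea is to compare $K_t$ with the Wulff shape it generates. The main tool is the mixed volume identity $V(K)=\frac1n\int h_K\,dS_K$ together with the Minkowski inequality $V_1(K,L)^n\ge V(K)^{n-1}V(L)$.

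\textbf{Step 1 (lower bound).} Fix $t$ and take $s\neq 0$ small. Since $K_{t+s}$ is the Wulff shape of $k_{t+s}$, we have $h_{K_{t+s}}\le k_{t+s}$ everywhere. Also $h_{K_t}=k_t$ holds $S_{K_t}$-a.e. Hence
\[
nV_1(K_t,K_{t+s})=\int h_{K_{t+s}}\,dS_{K_t}\le \int k_{t+s}\,dS_{K_t},
\qquad nV(K_t)=\int k_t\,dS_{K_t}.
\]
By Minkowski's inequality,
\[
V(K_t)^{\frac{n-1}{n}}V(K_{t+s})^{\frac1n}\le V_1(K_t,K_{t+s})\le V(K_t)+\frac1n\int (k_{t+s}-k_t)\,dS_{K_t}.
\]
Write $k_{t+s}-k_t=s\,\partial_tk+o(s)$, with the error uniform on $S^{n-1}$. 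We get
\[
V(K_{t+s})^{1/n}\le V(K_t)^{1/n}\Bigl(1+\frac{s}{nV(K_t)}\int\partial_tk\,dS_{K_t}+o(s)\Bigr).
\]
Raising to the $n$-th power and dividing by $s$ gives an upper bound on the upper difference quotient for $s>0$, and a lower bound on the lower quotient for $s<0$.

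\textbf{Step 2 (reverse inequality).} I swap the roles of $t$ and $t+s$. The same argument gives
\[
V(K_{t+s})^{\frac{n-1}{n}}V(K_t)^{\frac1n}\le V(K_{t+s})+\frac1n\int (k_t-k_{t+s})\,dS_{K_{t+s}}.
\]
Using the uniform convergence, this becomes
\[
V(K_t)^{1/n}\le V(K_{t+s})^{1/n}\Bigl(1-\frac{s}{nV(K_{t+s})}\int \partial_t k\,dS_{K_{t+s}}+o(s)\Bigr).
\]
To close the argument I need three facts:
\begin{enumerate}
\item $K_{t+s}\to K_t$ in the Hausdorff metric. This holds because $k_{t+s}\to k_t$ uniformly and $k$ is bounded below by a positive constant near $t$, and Wulff shapes depend continuously on uniformly convergent positive data.
\item $S_{K_{t+s}}\to S_{K_t}$ weakly, by the weak continuity of surface area measures.
\item $V(K_{t+s})\to V(K_t)$.
\end{enumerate}
Because $\partial_t k(t,\cdot)$ is a uniform limit of continuous functions, it is continuous. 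The weak convergence therefore gives
\[
\int\partial_tk\,dS_{K_{t+s}}\to\int\partial_tk\,dS_{K_t}.
\]
The two one-sided estimates from Steps 1 and 2 then squeeze both the right and left difference quotients of $V(K_t)^{1/n}$ to
\[
\frac{1}{n}V(K_t)^{\frac1n-1}\int\partial_tk\,dS_{K_t}.
\]
Multiplying by $nV(K_t)^{(n-1)/n}$ yields the claimed formula.

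\textbf{Main obstacle.} The delicate step is Step 2. It relies on the continuity of the Wulff shape construction and on weak convergence of $S_{K_{t+s}}$. It also requires the expansion error to be $o(s)$ uniformly, so that it stays $o(s)$ after integration against measures of uniformly bounded total mass. I would handle the bound on total mass by noting that all $K_{t+s}$ lie in a common ball for small $s$, so $S_{K_{t+s}}(S^{n-1})$ is bounded by monotonicity of surface area. Everything else is routine bookkeeping with one-sided difference quotients.
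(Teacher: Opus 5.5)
Your argument is correct. It is the classical Minkowski-inequality proof of Aleksandrov's variational lemma, extended to a general family $k(t,\cdot)$ by expanding $k_{t+s}-k_t=s\,\partial_t k(t,\cdot)+o(s)$ around the fixed $t$ in both one-sided estimates, and it uses only the facts recalled in the preliminaries ($h_{K_t}\le k_t$ everywhere, with equality $S_{K_t}$-a.e.) together with continuity of the Wulff construction and weak continuity of surface area measures. The paper gives no proof of its own and cites the lemma from Aleksandrov, Haberl--Lutwak--Yang--Zhang and Schneider, so your write-up supplies the standard argument behind that citation (the uniform mass bound you worry about is automatic, since weak convergence tested against the constant $1$ gives $S_{K_{t+s}}(S^{n-1})\to S_{K_t}(S^{n-1})$).
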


The following n-dimensional Pr\'ekopa-Leindler inequality is needed. 
\begin{theorem}[Pr\'ekopa-Leindler \cite{Prekopa1971,Prekopa1973LogarithmicConcaveFunctions}, Dubuc \cite{Dubuc1977}]
\label{thm:PL}
Let $0<\lambda<1$. If $f,g,h:\R^n\to[0,\infty)$ be measurable and
integrable functions satisfying $h((1-\lambda)s + \lambda t) \ge f(s)^{1-\lambda}g(t)^\lambda$ for any $ s,t \in \mathbb{R}^n$, then
\[
 \int_{\R^n}h
 \geq
 \left(\int_{\R^n}f\right)^{1-\lambda}
 \left(\int_{\R^n}g\right)^\lambda.
\]
When $ 0<\int_{\R^n}f, \int_{\R^n}g<+\infty$, if equality holds, then there exist $\tau \in \mathbb{R}^n$ and $a > 0$ such that for almost every $s \in \mathbb{R}^n$,
\[
g(s + \tau) = af(s), \qquad h(s + \lambda\tau) = f(s)^{1-\lambda}g(s + \tau)^\lambda.
\]
\end{theorem}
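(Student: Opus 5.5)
I would prove \Cref{thm:PL} by the transport (Knothe map) method. It gives the inequality and, with extra work, the equality statement in one framework. In the equality analysis I use only the first identity $g(s+\tau)=af(s)$ and derive the second from it.

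\textbf{Setup.} By the hypothesis with $s=t$-type choices and by scaling, I may assume $0<\int f,\int g<\infty$; otherwise the inequality is trivial. Replacing $g$ by $cg$ and $h$ by $c^\lambda h$ preserves the hypothesis, so I normalize $\int f=\int g=1$. Then $\mu=f\,dx$ and $\nu=g\,dx$ are probability measures.

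\textbf{The Knothe map.} Let $T:\R^n\to\R^n$ be the Knothe map pushing $\mu$ to $\nu$. It is defined coordinate by coordinate through one-dimensional monotone rearrangements of successive conditional densities. It is triangular: $T_k$ depends only on $s_1,\dots,s_k$ and is nondecreasing in $s_k$. Its diagonal partial derivatives $\partial_k T_k\ge0$ exist a.e.\ on $\{f>0\}$, and the Jacobian equation
\[
 f(s)=g(T(s))\prod_{k=1}^n\partial_kT_k(s)
\]
holds for $\mu$-a.e.\ $s$. This uses only one-dimensional monotone functions, where Lebesgue differentiation is available, applied on each fibre via Fubini.

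\textbf{The inequality.} Set $S=(1-\lambda)\mathrm{Id}+\lambda T$. This map is again triangular with diagonal entries $(1-\lambda)+\lambda\partial_kT_k>0$, and it is injective on $\{f>0\}$ up to a null set. The change of variables inequality for such triangular maps gives
\[
 \int h\ \ge\ \int_{\{f>0\}} h(S(s))\prod_k\bigl(1-\lambda+\lambda\partial_kT_k(s)\bigr)\,ds .
\]
By the hypothesis, $h(S(s))\ge f(s)^{1-\lambda}g(T(s))^\lambda$. By the weighted AM--GM inequality, $1-\lambda+\lambda\partial_kT_k\ge(\partial_kT_k)^\lambda$ for each $k$. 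Substituting both bounds and then the Jacobian equation, the integrand is at least
\[
 f^{1-\lambda}\,\bigl(g(T)\textstyle\prod_k\partial_kT_k\bigr)^\lambda=f .
\]
Hence $\int h\ge 1$, which is the normalized form of the inequality.

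\textbf{Equality.} If equality holds, every inequality in the chain above is an equality a.e.\ on $\{f>0\}$.

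\begin{enumerate}
\item Equality in AM--GM forces $\partial_kT_k=1$ a.e.\ for each $k$.
\item I then argue coordinate by coordinate. For $k=1$, $T_1$ is a monotone rearrangement with derivative $1$ a.e. To conclude that it is a translation on the convex hull of the support, I need $T_1$ to have no jump parts. Jumps would correspond to gaps in the support, i.e.\ to regions where $h$ could be positive but contributes nothing. Equality of the change of variables step says $h=0$ a.e.\ off $S(\{f>0\})$, while the hypothesis forces $h>0$ on $(1-\lambda)\{f>0\}+\lambda\{g>0\}$ up to null sets. This should exclude gaps.
\item Inductively, $T_k(s)=s_k+\tau_k(s_1,\dots,s_{k-1})$. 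I then show $\tau_k$ is constant by the same gap/convexity argument applied to the intermediate marginals.
\item This yields $T(s)=s+\tau$, so the Jacobian equation reads $g(s+\tau)=f(s)$. Undoing the normalization gives $g(s+\tau)=af(s)$ with $a=\int g/\int f$.
\item Equality at the step $h(S(s))\ge f(s)^{1-\lambda}g(T(s))^\lambda$ then gives $h(s+\lambda\tau)=f(s)^{1-\lambda}g(s+\tau)^\lambda$ for a.e.\ $s\in\{f>0\}$. For a.e.\ $s\in\{f=0\}$ the same identity holds because its right-hand side is $0$ and the change of variables step forces $h=0$ a.e.\ off $S(\{f>0\})=\{f>0\}+\lambda\tau$.
\end{enumerate}

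\textbf{Main obstacle.} I expect the hardest step to be the regularity in the equality case: showing that $T$ has no singular or jump part and that the shifts $\tau_k$ do not vary with the earlier coordinates. This step is exactly where Dubuc's analysis is needed. My first attempt would be the support/convexity argument sketched in step 2. If that fails, I would fall back on the inductive Fubini proof: reduce to the one-dimensional equality case via level sets and the equality case of the one-dimensional Brunn--Minkowski inequality, then propagate the translation through the fibres.
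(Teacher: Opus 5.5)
The paper does not prove this theorem; it quotes it from Pr\'ekopa, Leindler and Dubuc, so there is no in-paper argument to compare with. Your inequality part is the standard Knothe-map proof and is fine, up to routine measurability points. For example, replace $h$ by a Borel function $\tilde h\ge h$ with $\int\tilde h=\int h$ so that $h\circ S$ is measurable. The equality part has a genuine gap exactly where you put the obstacle, and the arguments sketched in steps 2--3 would not close it.

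\textbf{Step 3: the shifts.} After step 1, all the pointwise information you have extracted is invariant under nonlinear shears. That information is: $\partial_kT_k=1$, the Jacobian equation $f=g\circ T$ on $\{f>0\}$, and $h\circ S=f$ there. Take $T(s)=(s_1+\tau_1,\,s_2+\tau_2(s_1),\dots)$ with $\tau_2$ non-constant, and set $g:=f\circ T^{-1}$ and $h:=f\circ S^{-1}$ on $S(\{f>0\})$. Then $T$ is the Knothe map and every identity in your equality chain holds. Such a $T$ is excluded only because the hypothesis forces $h>0$ on all of $(1-\lambda)\{f>0\}+\lambda\{g>0\}$. That set then has larger measure than $S(\{f>0\})$. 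This is an $n$-dimensional Brunn--Minkowski phenomenon, and an argument run fibre by fibre or on intermediate marginals, as in your step 3, cannot detect it.

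\textbf{Step 2: jumps are not the only obstruction.} Let $U\subset[0,1]$ be open and dense with $|U|=m<1$, and take $f_1=m^{-1}1_{[0,m]}$, $g_1=m^{-1}1_U$. The monotone map $T_1$ is continuous with $T_1'=1$ a.e., yet it has a Cantor part. Moreover, the bare containment ``$h>0$ on the sum of supports, $h=0$ off $S(\{f>0\})$'' gives no contradiction until you compare measures.

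\textbf{How to close the gap.} Put $A=\{f>0\}$, $B=\{g>0\}$, $J_T=\prod_k\partial_kT_k$ and $J_S=\prod_k(1-\lambda+\lambda\partial_kT_k)$. Equality gives $J_S=J_T=1$ a.e.\ on $A$.
\begin{itemize}
\item The change-of-variables inequality comes from $S_\#(J_S1_A\,dx)=\rho\,dx$ with $\rho\le1$. Equality forces $\rho=1$ a.e.\ on $\{h>0\}$, so $|\{h>0\}|\le\int_AJ_S=|A|$.
\item Similarly $T_\#(1_A\,dx)=\rho_T\,dx$ with $\rho_T\le1$. Combined with $f=g\circ T$ on $A$ and $T_\#(f\,dx)=g\,dx$, this gives $\rho_T=1$ a.e.\ on $B$, hence $|A|=|B|$.
\item Since $\{h>0\}\supseteq(1-\lambda)A+\lambda B$ pointwise, you have equality in Brunn--Minkowski for measurable sets. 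Its equality case, in an essential or density-point form, shows that $A$ and $B$ agree a.e.\ with $K$ and $K+\tau$ for one convex body $K$.
\item Only now does your induction work. Each conditional map is a monotone homeomorphism between fibre intervals of equal length (projections of $K$ and of $K+\tau$), on which the conditional densities are a.e.\ positive. It has derivative $1$ a.e., so it has no singular part and is translation by $\tau_k$.
\end{itemize}
Steps 4--5 then go through as written. Your fallback, induction via level sets, runs into the same crux: fibre translations that vary with $s_1$ must be ruled out by a global argument of this kind.
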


\section{Coordinate Decomposition and Fiber Inclusion}
\label{cdfi}

Assume from now on that $n\geq3$.  Write
\begin{equation*}\label{eq:split}
 \R^n=E\oplus P,
 \qquad
 E=\operatorname{span}\{e_1,\ldots,e_{n-2}\}\simeq\R^{n-2},
 \qquad
 P=E^\perp\simeq\R^2.
\end{equation*}
For $x\in E$, define
\[
 K_x=\{y\in P:(x,y)\in K\},
 \qquad
 L_x=\{y\in P:(x,y)\in L\}.
\]
An empty fiber is assigned area zero.

\begin{lemma}\label{lem:fiber-symmetry}
Under the symmetry assumptions of \Cref{thm:main}, each nonempty fiber $K_x$ or $L_x$ is a compact convex set symmetric about
the origin of $P$.
\end{lemma}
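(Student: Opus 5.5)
Compactness and convexity of a nonempty fiber come first, and they are immediate. The fiber $K_x$ is the image under the isometry $P\to\{x\}\times P$, $y\mapsto(x,y)$, of the intersection of $K$ with the affine plane $\{x\}\times P$. That intersection is compact and convex, since it is the intersection of a compact convex set with a closed affine subspace. The same argument applies to $L_x$. Note that $K_x$ may have empty interior in $P$, which is why the statement says compact convex set rather than convex body.

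The substantive part is the central symmetry of $K_x$ in $P$. Here I combine the origin-symmetry of $K$ with the $n-2$ coordinate reflections. Let $\rho_i$ denote the reflection $x_i\mapsto -x_i$ for $i=1,\dots,n-2$. The composition $\rho_1\circ\cdots\circ\rho_{n-2}$ is the map $(x,y)\mapsto(-x,y)$ for $x\in E$ and $y\in P$, and $K$ is invariant under it. The antipodal map $(x,y)\mapsto(-x,-y)$ also preserves $K$. Composing the two, $K$ is invariant under
\[
 \sigma:(x,y)\longmapsto(x,-y),
\]
which is the reflection through $E$, i.e.\ the map acting as $-\mathrm{id}$ on $P$.

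Given this, take $y\in K_x$. Then $(x,y)\in K$, so $\sigma(x,y)=(x,-y)\in K$, and therefore $-y\in K_x$. This shows $K_x=-K_x$, so $K_x$ is symmetric about the origin of $P$. The identical argument applies to $L_x$.

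There is no real obstacle. The only point needing care is to notice that the reflections alone give $K_x=K_{-x}$, which relates different fibers. Symmetry within a single fiber comes only after composing with the origin-symmetry to obtain $\sigma$, which fixes the $E$-coordinate.
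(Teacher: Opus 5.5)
Your proof is correct and follows essentially the same route as the paper. Compactness and convexity come from intersecting $K$ with the closed affine plane $\{x\}\times P$, and the central symmetry of each fiber comes from combining origin-symmetry with the $n-2$ coordinate reflections to obtain $(x,y)\mapsto(x,-y)$. The paper checks convexity by an explicit convex combination instead, which is only a cosmetic difference.
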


\begin{proof}
Without loss of generality, we only need to prove $K_x$ is a compact convex set symmetric about
the origin of $P$.

For any $(x, y) \in K$, the origin-symmetry of $K$ implies that $(-x, -y) \in K$. Applying the invariance under coordinate reflections in the subspace $E$, we obtain $(x, -y) \in K$.
By the definition of the fiber $K_x$, $y \in K_x$ implies $-y \in K_x$. Thus, the fiber $K_x$ is symmetric about the origin of $P$. 

Define the affine subspace $A_x = \{x\} \times P$. The fiber $K_x$ can be viewed as the projection of the intersection $K \cap A_x$ onto the subspace $P$. Because the body $K$ is a compact set and $A_x$ is closed, their intersection $K \cap A_x$ is compact. Since the projection is continuous, it preserves compactness, so $K_x$ is compact.

Let $y_1, y_2 \in K_x$ be two arbitrary points. We have $(x, y_1) \in K$ and $(x, y_2) \in K$. Since $K$ is convex, for any $\lambda \in [0, 1]$, the convex combination of $(x, y_1)$ and $(x, y_2)$ belongs to $K$,
\[
(1-\lambda)(x, y_1) + \lambda(x, y_2)= (x, (1-\lambda)y_1 + \lambda y_2) \in K,
\]
which implies $(1-\lambda)y_1 + \lambda y_2 \in K_x$. Therefore, $K_x$ is a convex set.
\end{proof}

Let $E_+=(0,\infty)^{n-2}$.  For $x,x'\in E_+$ define the coordinatewise
geometric interpolation
\begin{equation}\label{eq:box-product}
 x\cdot_\lambda x'
 :=\bigl(x_1^{1-\lambda}{x'_1}^{\lambda},\ldots,
         x_{n-2}^{1-\lambda}{x'_{n-2}}^{\lambda}\bigr).
\end{equation}

\begin{lemma}
\label{lem:inclusion}
Under the symmetry assumptions of \Cref{thm:main}, let $x, x' \in E_+$ and suppose both $K_x$ and $L_{x'}$ have positive 2-dimensional volume. Then for $0< \lambda <1$,
\begin{equation}\label{incls}
    \{x \cdot_\lambda x'\} \times \left( (1-\lambda) \cdot K_x +_0 \lambda \cdot L_{x'} \right) \subset (1-\lambda) \cdot K +_0 \lambda \cdot L.
\end{equation}
\end{lemma}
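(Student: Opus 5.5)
We have to show that a point $(x\cdot_\lambda x', y)$, with $y$ in the planar $L_0$-sum of the fibers, satisfies every defining inequality of $(1-\lambda)\cdot K+_0\lambda\cdot L$. That is, for all $(u,v)\in E\oplus P$ with $(u,v)\neq 0$ we need
\[
\langle x\cdot_\lambda x',u\rangle+\langle y,v\rangle\le h_K(u,v)^{1-\lambda}h_L(u,v)^\lambda .
\]

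\emph{Reduction to $u$ with nonnegative coordinates.} By the coordinate reflection symmetry, $h_K(u,v)=h_K(|u|_c,v)$, where $|u|_c=(|u_1|,\dots,|u_{n-2}|)$, and similarly for $h_L$. Since $x\cdot_\lambda x'\in E_+$, we have $\langle x\cdot_\lambda x',u\rangle\le\langle x\cdot_\lambda x',|u|_c\rangle$. So it suffices to treat $u\in[0,\infty)^{n-2}$.

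\emph{Lower bound on the support functions.} Since $\{x\}\times K_x\subset K$,
\[
h_K(u,v)\ge\langle x,u\rangle+h_{K_x}(v),
\]
and likewise $h_L(u,v)\ge\langle x',u\rangle+h_{L_{x'}}(v)$. Both right-hand sides are positive: $K_x$ is origin-symmetric with nonempty interior by \Cref{lem:fiber-symmetry}, so $h_{K_x}(v)>0$ for $v\neq0$, and if $v=0$ then $u\neq0$ gives $\langle x,u\rangle>0$. The same holds for $L_{x'}$.

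\emph{Splitting by Hölder.} Writing $a=\langle x,u\rangle$, $b=h_{K_x}(v)$, $a'=\langle x',u\rangle$, $b'=h_{L_{x'}}(v)$, Hölder's inequality (the concavity of the weighted geometric mean) gives
\[
(a+b)^{1-\lambda}(a'+b')^{\lambda}\ge a^{1-\lambda}a'^{\lambda}+b^{1-\lambda}b'^{\lambda}.
\]
Applying coordinatewise Hölder once more, with $u_i\ge0$,
\[
a^{1-\lambda}a'^{\lambda}=\Bigl(\sum u_ix_i\Bigr)^{1-\lambda}\Bigl(\sum u_ix'_i\Bigr)^{\lambda}\ge\sum u_ix_i^{1-\lambda}x_i'^{\lambda}=\langle x\cdot_\lambda x',u\rangle.
\]
For the second term, $y\in(1-\lambda)\cdot K_x+_0\lambda\cdot L_{x'}$ means by definition $\langle y,v\rangle\le h_{K_x}(v)^{1-\lambda}h_{L_{x'}}(v)^{\lambda}=b^{1-\lambda}b'^{\lambda}$ for all $v$, including $v=0$ trivially. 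Combining the three displays yields the required inequality.

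\emph{Expected obstacle.} The main subtlety is the reduction to nonnegative $u$ via the reflection symmetry, together with checking positivity so that the power expressions are well defined. This is where the hypothesis that both fibers have positive area, and hence positive support functions, is used. Origin-symmetry of the fibers is needed only to guarantee that their support functions are positive away from $v=0$.
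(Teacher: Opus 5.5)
Your proof is correct and follows essentially the same route as the paper: replace $u$ by $\bar u=(|u_1|,\dots,|u_{n-2}|)$ using the coordinate reflections, bound $h_K(\bar u,v)\ge\langle x,\bar u\rangle+h_{K_x}(v)$ (and likewise for $L$), and close with H\"older's inequality. The only difference is cosmetic: the paper applies H\"older once to the $n-1$ terms $\bar u_1x_1,\dots,\bar u_{n-2}x_{n-2},h_{K_x}(v)$, while you split it into a two-term step followed by a coordinatewise step; your positivity checks are harmless but not actually needed, since H\"older holds for nonnegative reals.
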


\begin{proof}
Denote $a = x \cdot_\lambda x'$ and take an arbitrary $w \in (1-\lambda) \cdot K_x +_0 \lambda \cdot L_{x'}$.

To prove that $(a, w) \in (1-\lambda) \cdot K +_0 \lambda \cdot L$, it suffices to show that for any vector $(u, v) \in E \oplus P$,
\[
\langle a, u \rangle + \langle w, v \rangle \le h_K(u, v)^{1-\lambda} h_L(u, v)^\lambda.
\]

Denote $\bar{u} = (|u_1|, \dots, |u_{n-2}|)$. Since $a_i \ge 0$, we have
\[
\langle a, u \rangle \le \sum_{i=1}^{n-2} \bar{u}_i x_i^{1-\lambda} (x'_i)^\lambda.
\]
Since $w \in (1-\lambda) \cdot K_x +_0 \lambda \cdot L_{x'}$, it follows from the definition of the $L_0$-sum that
\[
\langle w, v \rangle \le h_{K_x}(v)^{1-\lambda} h_{L_{x'}}(v)^\lambda.
\]
Thus,
\begin{align*}
\langle a, u \rangle + \langle w, v \rangle &\le \sum_{i=1}^{n-2} \bar{u}_i x_i^{1-\lambda} (x'_i)^\lambda + h_{K_x}(v)^{1-\lambda} h_{L_{x'}}(v)^\lambda \\
&= \sum_{i=1}^{n-2} (\bar{u}_i x_i)^{1-\lambda} (\bar{u}_i x'_i)^\lambda+ h_{K_x}(v)^{1-\lambda}h_{L_{x'}}(v)^\lambda\\
&\le \left( \sum_{i=1}^{n-2} \bar{u}_i x_i + h_{K_x}(v)\right)^{1-\lambda} \left( \sum_{i=1}^{n-2} \bar{u}_i x'_i + h_{L_{x'}}(v)\right)^\lambda \\
&= \left( \langle x, \bar{u} \rangle + h_{K_x}(v) \right)^{1-\lambda} \left( \langle x', \bar{u} \rangle + h_{L_{x'}}(v) \right)^\lambda.
\end{align*}
The last inequality follows from Hölder's inequality.

By the definition of the support function, we have
\begin{align*}
h_K(\bar{u}, v) &= \max_{(X, Y) \in K} \langle (X, Y), (\bar{u}, v) \rangle = \max_{(X, Y) \in K} (\langle X, \bar{u} \rangle + \langle Y, v \rangle) \\
&\ge \max_{(x, Y_x) \in K} (\langle x, \bar{u} \rangle + \langle Y_x, v \rangle) = \langle x, \bar{u} \rangle + \max_{Y_x \in K_x} \langle Y_x, v \rangle \\
&= \langle x, \bar{u} \rangle + h_{K_x}(v).
\end{align*}
Similarly, for the convex body $L$, we have
\[
h_L(\bar{u}, v) \ge \langle x', \bar{u} \rangle + h_{L_{x'}}(v).
\]

The imposed coordinate reflections imply that the support functions are invariant under sign changes of the coordinates in $E$, which leads to $h_K(\bar{u}, v) = h_K(u, v)$ and $h_L(\bar{u}, v) = h_L(u, v)$.

Thus, combining all the inequalities yields
$$
\langle a, u \rangle + \langle w, v \rangle \le (h_K(u, v))^{1-\lambda} (h_L(u, v))^\lambda.
$$
This completes the proof.
\end{proof}

\begin{remark}\label{rem:empty-fibers}
Both \Cref{lem:fiber-symmetry} and \Cref{lem:inclusion} admit generalized versions under the symmetry assumptions of \Cref{thm:orthogonal-reflections}.
 For expository convenience, we present only the coordinate reflection version.
\end{remark}

\section{Log-Brunn-Minkowski Inequality under $\mathrm{n-2}$ reflection symmetries}
\label{lbmi}
\begin{proof}[Proof of the inequality assertion in \Cref{thm:main}]
The cases $\lambda=0,1$ are identities.  Assume $0<\lambda<1$ and set
$M=(1-\lambda)\cdot K+_0 \lambda \cdot L$ throughout this section.

For any $s = (s_1, \dots, s_{n-2}) \in \mathbb{R}^{n-2}$, denote $e^s = (e^{s_1}, \dots, e^{s_{n-2}})$ and $\langle s, \mathbf{1} \rangle = s_1 + \dots + s_{n-2}$. 
Define the following functions on $\mathbb{R}^{n-2}$:
$$
    G(s) = e^{\langle s, \mathbf{1} \rangle} V_2(K_{e^s}), \qquad H(s) = e^{\langle s, \mathbf{1} \rangle} V_2(L_{e^s}), \qquad F(s) = e^{\langle s, \mathbf{1} \rangle} V_2(M_{e^s}).
$$
Clearly, these functions are nonnegative, measurable, and integrable.

If $r = (1-\lambda)s + \lambda t$, then by definition we have $e^r = e^s \cdot_\lambda e^t$. 
If either $K_{e^s}$ or $L_{e^t}$ has zero 2-dimensional volume, $V_2(M_{e^r}) \ge V_2(K_{e^s})^{1-\lambda} V_2(L_{e^t})^\lambda$ naturally holds.
If both $K_{e^s}$ and $L_{e^t}$ have positive 2-dimensional volume,
by \Cref{lem:inclusion}, we obtain the fiber inclusion
\[
    M_{e^r} \supset (1-\lambda) \cdot K_{e^s} +_0 \lambda \cdot L_{e^t}.
\]
Applying \Cref{thm:BLYZ}, we have
\[
    V_2(M_{e^r}) \ge V_2(K_{e^s})^{1-\lambda} V_2(L_{e^t})^\lambda.
\]
Multiplying both sides by $e^{\langle r, \mathbf{1} \rangle} = \left( e^{\langle s, \mathbf{1} \rangle} \right)^{1-\lambda} \left( e^{\langle t, \mathbf{1} \rangle} \right)^\lambda$, we obtain
\[
    F((1-\lambda)s + \lambda t) \ge G(s)^{1-\lambda} H(t)^\lambda.
\]
Applying the Pr\'ekopa-Leindler inequality to the functions $F, G$, and $H$, and changing variables back to $E_+$ gives:
\begin{align*}
    V(M \cap (E_+ \times P)) &= \int_{\mathbb{R}^{n-2}} F(r) \, dr \\
    &\ge \left( \int_{\mathbb{R}^{n-2}} G(s) \, ds \right)^{1-\lambda} \left( \int_{\mathbb{R}^{n-2}} H(t) \, dt \right)^\lambda \\
    &= V(K \cap (E_+ \times P))^{1-\lambda} V(L \cap (E_+ \times P))^\lambda.
\end{align*}

The Wulff body $M$ inherits all coordinate reflections from $K,L$.
Coordinate hyperplanes have zero volume, so the three orthant volumes above
are equal to $2^{-(n-2)}V(M)$, $2^{-(n-2)}V(K)$, and $2^{-(n-2)}V(L)$, respectively.  The common
coefficient $2^{-(n-2)}$ cancels and yields \eqref{eq:main}.
\end{proof}

From now on, we discuss the equality conditions.
\begin{proposition}
    \label{prop:ec}
Let $n \ge 3$ and $0 < \lambda < 1$. If equality holds in \Cref{thm:main},
 then there exists a constant $d > 0$ and a vector $a \in (0,+\infty)^{n-2}$, independent of $x$, such that for almost every $x \in E_+$ such that $V_2(K_x)>0$, the fiber pair $(K_x, L_{A x})$ is either a pair of dilates satisfying $L_{A x} = d K_x$, or a pair of parallelograms with parallel sides,
 where $A = \text{diag}(a_1, \dots, a_{n-2})$.
\end{proposition}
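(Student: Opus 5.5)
Equality in \eqref{eq:main} forces equality in every inequality of the chain used in the proof of the inequality assertion, and I will read off the fiber structure from that.

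\emph{Reduction to Pr\'ekopa--Leindler equality.} Since $K$, $L$, $M$ are invariant under the coordinate reflections, $V(M)=V(K)^{1-\lambda}V(L)^\lambda$ gives
\[
\int_{\R^{n-2}}F=\Bigl(\int_{\R^{n-2}}G\Bigr)^{1-\lambda}\Bigl(\int_{\R^{n-2}}H\Bigr)^{\lambda},
\]
with all integrals positive and finite because $K,L$ are bodies containing the origin in their interiors. The equality case of \Cref{thm:PL} then yields $\tau\in\R^{n-2}$ and $c>0$ such that, for a.e.\ $s$,
\[
H(s+\tau)=c\,G(s), \qquad F(s+\lambda\tau)=G(s)^{1-\lambda}H(s+\tau)^\lambda .
\]
Set $a=e^{\tau}$ (coordinatewise), $A=\operatorname{diag}(a)$, and $x=e^s$. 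Then $e^{s+\tau}=Ax$ and $e^{\langle s+\tau,\mathbf 1\rangle}=\det A\, e^{\langle s,\mathbf 1\rangle}$. The first identity becomes $V_2(L_{Ax})=\frac{c}{\det A}V_2(K_x)$ for a.e.\ $x\in E_+$. The second becomes $V_2(M_{x\cdot_\lambda Ax})=V_2(K_x)^{1-\lambda}V_2(L_{Ax})^\lambda$, using $e^{s+\lambda\tau}=x\cdot_\lambda Ax$ and the fact that the exponential weights cancel exactly.

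\emph{Fiberwise equality in the planar inequality.} Fix such an $x$ with $V_2(K_x)>0$. Then $V_2(L_{Ax})>0$ as well, so \Cref{lem:inclusion} applies to the pair $(x,Ax)$ and gives $M_{x\cdot_\lambda Ax}\supset(1-\lambda)\cdot K_x+_0\lambda\cdot L_{Ax}$. Combining this with \Cref{thm:BLYZ},
\[
V_2(K_x)^{1-\lambda}V_2(L_{Ax})^\lambda=V_2(M_{x\cdot_\lambda Ax})\ge V_2\bigl((1-\lambda)\cdot K_x+_0\lambda\cdot L_{Ax}\bigr)\ge V_2(K_x)^{1-\lambda}V_2(L_{Ax})^\lambda .
\]
So equality holds in the planar log-Brunn--Minkowski inequality for the origin-symmetric pair $(K_x,L_{Ax})$ given by \Cref{lem:fiber-symmetry}. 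The equality clause of \Cref{thm:BLYZ} then says that $K_x$ and $L_{Ax}$ are either dilates or parallelograms with parallel sides.

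\emph{Making the dilation factor uniform.} In the dilate case write $L_{Ax}=d_xK_x$. Comparing areas gives $d_x^2V_2(K_x)=\frac{c}{\det A}V_2(K_x)$, hence $d_x=\sqrt{c/\det A}=:d$, which is independent of $x$. This constant is the $d$ of the statement, and $a=e^\tau$ is the required vector.

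The main obstacle is bookkeeping with null sets. The Pr\'ekopa--Leindler conclusion holds only almost everywhere in $s$, and the map $s\mapsto e^s$ is a diffeomorphism onto $E_+$, so null sets correspond and ``a.e.\ $x\in E_+$'' is legitimate. I also need the exceptional sets of the two Pr\'ekopa--Leindler identities to be combined into a single null set, and I need to check that the equality case of \Cref{thm:PL} applies, which requires $G$, $H$, $F$ to be measurable. Measurability follows from the continuity of fiber areas in the interior of the projection.
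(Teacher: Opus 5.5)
Your proposal is correct and follows the paper's proof essentially step for step. You use Dubuc's equality case for Pr\'ekopa--Leindler with $a=e^{\tau}$; your point $x\cdot_\lambda Ax$ is the paper's $A^\lambda x$, and your $c/\det A$ is the paper's $d^2=Ce^{-\langle\tau,\mathbf{1}\rangle}$. You then force equality in the chain built from \Cref{lem:inclusion} and \Cref{thm:BLYZ}, and fix the dilation factor through the area relation, exactly as the paper does. Your explicit check that $V_2(L_{Ax})>0$ before invoking \Cref{lem:inclusion}, together with your null-set bookkeeping, makes precise points the paper leaves implicit.
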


\begin{proof}
We parameterize the positive orthant $E_+$ using the exponential map $x = e^s$ and $x' = e^t$, where $s, t \in \mathbb{R}^{n-2}$.
Assume now that equality holds in (\ref{eq:main}). By Dubuc's equality characterization for Pr\'ekopa-Leindler inequality, there exist a constant $C > 0$, and a translation vector $\tau \in \mathbb{R}^{n-2}$ such that for almost every $s \in \mathbb{R}^{n-2}$
$$H(s+\tau)=CG(s), \qquad F(s+\lambda \tau)=C^\lambda G(s).$$
Specifically,
\begin{align}
    e^{\langle s, \mathbf{1} \rangle} V_2(L_{e^s}) &= C e^{\langle s-\tau, \mathbf{1} \rangle} V_2(K_{e^{s-\tau}}), \quad \text{for a.e. } s\in \R^{n-2} \label{eq:PL_L} \\
    e^{\langle s, \mathbf{1} \rangle} V_2(M_{e^s}) &= C^\lambda e^{\langle s-\lambda \tau, \mathbf{1} \rangle} V_2(K_{e^{s-\lambda \tau}}), \quad \text{for a.e. } s\in \R^{n-2}. \label{eq:PL_M}
\end{align}
Integrating (\ref{eq:PL_L}) over $\mathbb{R}^{n-2}$ immediately yields $C = V(L)/V(K)$.

We define the vector $a =e^{\tau} = (e^{\tau_1}, \dots, e^{\tau_{n-2}}) \in E_+$, the constant $d^2 = C \exp(-\langle \tau, \mathbf{1} \rangle)$ and $A^\lambda = \text{diag}(a_1^\lambda, \dots, a_{n-2}^\lambda)$. For $x = e^t$, $A x = e^{t + \tau}$. 
By substituting $s = t + \tau$ into (\ref{eq:PL_L}) and cancelling the exponential terms, we obtain
\begin{equation}
    V_2(L_{A x}) = d^2 V_2(K_x) \quad \text{for a.e. } x\in E_+. \label{eq:area_L}
\end{equation}
Similarly, substituting $s = r+ \lambda\tau$ into (\ref{eq:PL_M}) yields
\begin{equation}
    V_2(M_{A^\lambda x}) = d^{2\lambda} V_2(K_x) \quad \text{for a.e. } x\in E_+. \label{eq:area_M}
\end{equation}
According to \Cref{lem:inclusion}, the $L_0$-sum of fibers is contained in the fiber of the $L_0$-sum, which yields $M_{A^\lambda x} \supset (1-\lambda) \cdot K_x +_0 \lambda \cdot L_{A x}$. 
Applying \Cref{thm:BLYZ}, we have
\begin{align*}
    V_2(M_{A^\lambda x}) &\ge V_2((1-\lambda) \cdot K_x +_0 \lambda \cdot L_{A x}) \\
    &\ge V_2(K_x)^{1-\lambda} V_2(L_{A x})^\lambda \\
    &= V_2(K_x)^{1-\lambda} (d^2 V_2(K_x))^\lambda \\
    &= d^{2\lambda} V_2(K_x).
\end{align*}
Comparing with (\ref{eq:area_M}), all intermediate inequalities are equalities.
Thus, for almost every $x \in E_+$ with $V_2(K_x) > 0$, we have
\[
    M_{A^\lambda x} = (1-\lambda) \cdot K_x +_0 \lambda \cdot L_{A x}.
\]
Furthermore, the equality in the second step implies that the planar logarithmic Brunn-Minkowski inequality achieves equality for the fibers $K_x$ and $L_{A x}$. According to the equality conditions of \Cref{thm:BLYZ}, this happens if and only if $K_x$ and $L_{A x}$ are either dilates or parallelograms with parallel sides. If they are dilates, the area relation (\ref{eq:area_L}) fixes the dilation factor as $L_{A x} = d K_x$.
\end{proof}

\begin{proof}[Proof of equality case in \Cref{thm:main} under $C^2_+$ assumption]
Without loss of generality, assume that $K$ is of class $C^2_+$.
    
Since $K$ is $C^2_+$, its boundary $\partial K$ is strictly convex and contains no straight line segments.
 Consequently, for any $x \in \text{int}(K|E)$, the fiber $K_x$ has a strictly convex boundary. Thus, $K_x$ cannot be a parallelogram.

By the necessary condition from \Cref{prop:ec}, it must be the case that for almost every $x \in E_+$ such that $V_2(K_x) > 0$, the fiber pairs are dilates:
\begin{equation} \label{eq:fiber_dilate}
    L_{A x} = d K_x.
\end{equation}

Denote $T: \mathbb{R}^n \to \mathbb{R}^n$ be the global linear transformation defined by $T = \text{diag}(A, d I_2)$, meaning $T(x, y) = (Ax, dy)$. 
Specifically, $T = \operatorname{diag}(a_1, \dots, a_{n-2}, d, d) $. Also, define $S = T^\lambda = \operatorname{diag}(a_1^\lambda, \dots, a_{n-2}^\lambda, d^\lambda, d^\lambda)= \text{diag}(A^\lambda, d^\lambda I_2)$.



If $V_2(K_x)=0$, then \eqref{eq:area_L} implies that both $L_{Ax}$ and $T(K)_{Ax}$ have zero planar measure.
According to \eqref{eq:fiber_dilate}, for almost every $x\in E_+$, we have
$V_2\!\left(L_{Ax}\mathbin\triangle (T(K))_{Ax}\right)=0.$
Since $A$ is diagonal, a change of variables and Fubini's theorem give
\[
  \int_{E_+}V_2\!\left(L_z\mathbin\triangle (T(K))_z\right)\,dz=0.
\]
The coordinate reflection symmetries extend the identity to all orthants, and hence
$V(L\mathbin\triangle T(K))=0$. Therefore $L=T(K)$.



Having proved \(L=T(K)\), we now show that \(M=S(K)\).
For \(x\in E_+\cap\operatorname{int}(K|E)\), Lemma 3.2 yields
\[
 \{A^\lambda x\}\times d^\lambda K_x
 =\{A^\lambda x\}\times
 \big((1-\lambda)\cdot K_x+_0\lambda\cdot L_{Ax}\big)
 \subset M .
\]
By closure and the coordinate-reflection symmetries, \(S(K)\subset M\).
Moreover, using the assumed equality and \(L=T(K)\), we obtain
\[
 V(M)=V(K)^{1-\lambda}V(L)^\lambda
      =(\det T)^\lambda V(K)
      =V(T^\lambda K)=V(S(K)).
\]
Therefore \(S(K)=M\).

We regard the last two coordinates in the subspace $P = \mathbb{R}^2$ as a single variable. For notation convenience, let $t_i = a_i$ for $1 \le i \le n-2$, and $t_{n-1} = d$. We need to show that $t_1 = \dots = t_{n-1}$.

Choose a direction vector $q = (q_1, \dots, q_{n-2}, q_P) \in \mathbb{R}^n$ such that every $q_i \neq 0$ and $q_P \neq \mathbf{0}$. Because $K$ is $C^2_+$, there is a unique boundary point $z = (z_1, \dots, z_{n-2}, z_P) \in \partial K$ that admits $q$ as its outer normal vector.
For each coordinate reflection $R_i$, $R_iK=K$ give
$2z_iq_i=\langle z,q\rangle-\langle R_i z,q\rangle\ge0$.
The reflection $R_P=(-I)R_1\cdots R_{n-2}$, which acts as $(x,y)\mapsto(x,-y)$,
similarly gives $2\langle z_P,q_P\rangle\ge0$. If equality holds in either relation,
then $Rz$ will be the support point in direction $q$. At $Rz$, both $q$ and
$Rq$ will be outer normals. The smoothness of $\partial K$ implies uniqueness of the
outer unit normal, so $Rq=q$, contradicting $q_i\ne0$ and $q_P\ne0$. Thus
\begin{equation} \label{eq:signs}
    z_i q_i > 0 \quad (1 \le i \le n-2), \qquad \langle z_P, q_P \rangle > 0.
\end{equation}

Define a point $p = Sz \in \partial(S(K))$ and a vector $u = S^{-T}q$. 
Since \(S\in GL(n)\) and \(K\in\mathcal K_+^2\), the body
\(M=S(K)\) also belongs to \(\mathcal K_+^2\). In particular,
\(p=Sz\) is a regular boundary point of \(M\).
For any vector $v \in \mathbb{R}^n$, the support function of $S(K)$ satisfies
\[
    h_{S(K)}(v) = \sup_{x \in S(K)} \langle x, v \rangle = \sup_{y \in K} \langle Sy, v \rangle = \sup_{y \in K} \langle y, S^T v \rangle = h_K(S^T v).
\]
Substituting $v = u = S^{-T}q$ into this identity, we obtain
\begin{equation} \label{eq:h_SK}
    h_{S(K)}(u) = h_K(S^T (S^{-T}q)) = h_K(q).
\end{equation} 
Since
$$
    \langle p, u \rangle = \langle Sz, S^{-T}q \rangle = \langle z, S^T S^{-T} q \rangle = \langle z, q \rangle,
$$
we have
\[
    h_{S(K)}(u) = h_K(q) = \langle z, q \rangle = \langle p, u \rangle.
\]

Since $p \in \partial(S(K))$ and its inner product with $u$ achieves the supremum of the support function, this also geometrically confirms that $u$ is indeed an outer normal vector to the boundary $\partial(S(K))$ at the point $p$.

Furthermore, with $(1-\lambda)\cdot K +_0 \lambda \cdot L = S(K)$ and the Wulff property recalled in \Cref{pre}, we therefore obtain
\begin{equation} \label{eq:exact_equality}
    \langle p, u \rangle = h_{S(K)}(u) = h_K(u)^{1-\lambda} h_{T(K)}(u)^\lambda = h_K(u)^{1-\lambda} h_K(Tu)^\lambda,
\end{equation}
where the last step uses $h_{T(K)}(u) = h_K(T^T u) = h_K(Tu)$ due to the symmetry of $T$.

To exploit this equality, we denote
\[
    A_i = z_i u_i \quad (1 \le i \le n-2), \qquad A_{n-1} = \langle z_P, u_P \rangle,
\]
where $u_i=t_i^{-\lambda}q_i$ and $u_P=d^{-\lambda}q_P$.
By \eqref{eq:signs}, $A_j > 0$ for all $1 \le j \le n-1$.

We now compute the inner product $\langle p, u \rangle$ specifically,
\begin{equation} \label{eq:inner_expand}
    \langle p, u \rangle = \langle Sz, u \rangle = \langle z, Su \rangle = \sum_{i=1}^{n-2} z_i (a_i^\lambda u_i) + \langle z_P, d^\lambda u_P \rangle = \sum_{j=1}^{n-1} t_j^\lambda A_j.
\end{equation}
Moreover, by the definition of the support point $z \in K$, we have the standard inequalities:
\begin{align}
    \sum_{j=1}^{n-1} A_j &= \langle z, u \rangle \le h_K(u), \label{eq:ineq_1} \\
    \sum_{j=1}^{n-1} t_j A_j &= \langle z, Tu \rangle \le h_K(Tu). \label{eq:ineq_2}
\end{align}
Applying the discrete H\"older inequality to the sum in \eqref{eq:inner_expand}, and utilizing \eqref{eq:ineq_1} and \eqref{eq:ineq_2}, we deduce:
\begin{align*}
    \langle p, u \rangle &= \sum_{j=1}^{n-1} A_j^{1-\lambda} (t_j A_j)^\lambda \le \left( \sum_{j=1}^{n-1} A_j \right)^{1-\lambda} \left( \sum_{j=1}^{n-1} t_j A_j \right)^\lambda \\
    &\le h_K(u)^{1-\lambda} h_K(Tu)^\lambda.
\end{align*}

If the values $t_j$ were not all identical, the first inequality is strict. This would result in $\langle p, u \rangle < h_K(u)^{1-\lambda} h_K(Tu)^\lambda$, which directly contradicts the exact equality established in \eqref{eq:exact_equality}.
Therefore, it follows that $t_1 = \dots = t_{n-2} = t_{n-1}$, which means $a_1 = \dots = a_{n-2} = d$. Consequently, $A = d I_{n-2}$, and the global transformation is a pure dilation $T = d I_n$, implying that $L$ and $K$ are dilates. 

For the sufficient condition, if $L=cK$ and $K$ belongs to $\mathcal{K}_+^2$, a direct computation yields the equality.
\end{proof}

\Cref{thm:orthogonal-reflections} can be viewed as a rotation of \Cref{thm:main}.

\begin{proof}[Proof of \Cref{thm:orthogonal-reflections}]
    Choose $Q\in O(n)$ such that $Qv_i=e_i$ for $i=1,\ldots,n-2$.  Then
$Q\rho_iQ^{-1}$ is reflection in $e_i^\perp$, so $QK$ and $QL$ satisfy
\Cref{thm:main}.  Since
\[
 h_{QK}(u)=h_K(Q^Tu),
 \qquad
 (1-\lambda)\cdot QK +_0 \lambda \cdot QL =Q ((1-\lambda)\cdot K +_0 \lambda \cdot L),
\]
and $Q$ preserves volume, the result follows.
\end{proof}

A direct corollary of \Cref{thm:orthogonal-reflections} is log-Brunn-Minkowski inequality for bodies of revolution, which is illustrated in \Cref{cor:revolution}.

\begin{proof}[Proof of \Cref{cor:revolution}]
      Assume $n\geq3$ and take $E=(\operatorname{span}\{a,b\})^\perp$.  If the axes are distinct, then
$\dim E=n-2$; if they coincide, then $\dim E=n-1$. We can therefore choose a set of $n - 2$ orthogonal unit vectors, $\{v_1, \dots, v_{n-2}\} \subset E$.

For each $1 \le i \le n-2$, let $\rho_i$ denote the reflection across the hyperplane $v_i^\perp = \{x \in \mathbb{R}^n : \langle x, v_i \rangle = 0\}$. Because $v_i \in (\operatorname{span}\{a, b\})^\perp$, the normal vector $v_i$ is orthogonal to both $a$ and $b$. This geometrically means that the axes of revolution $\mathbb{R}a$ and $\mathbb{R}b$ lie in the hyperplane $v_i^\perp$. As a result, the reflection $\rho_i$ fixes the axes $\mathbb{R}a$ and $\mathbb{R}b$ pointwise.

Since $K$ and $L$ are bodies of revolution about $\mathbb{R}a$ and $\mathbb{R}b$ respectively, their symmetry groups contain all orthogonal transformations that fix their respective axes, yielding $\rho_i(K) = K$ and $\rho_i(L) = L$ for all $1 \le i \le n-2$. The volume inequality thus follows directly by applying \Cref{thm:orthogonal-reflections}.
\end{proof}

\begin{remark}
It is apparent that if at least one of $K$, $L$ belongs to $\mathcal{K}_{+,e}^2$ and $0<\lambda <1$,
 equality in \Cref{cor:revolution} holds if and only if they are dilates.
Iffland proved the local logarithmic Brunn-Minkowski inequality for bodies of
revolution.
\begin{theorem}[Iffland \cite{iffland2026local}]
    Let $K$ be an origin symmetric convex body of revolution with nonempty interior and let $L$ be an arbitrary convex body such that 
\begin{equation}\label{eq:revolution}
\int_{S^{n-1}} \frac{x h_L(x)}{h_K(x)} dS_K(x) = 0.
\end{equation}
Then the pair $(K, L)$ satisfies 
\[
\frac{V(L, K[n-1])^2}{V(K)} \geq \frac{n-1}{n}V(L, L, K[n-2]) + \frac{1}{n^2} \int_{S^{n-1}} \frac{h_L^2}{h_K} dS_K.
\]
\end{theorem}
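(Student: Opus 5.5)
The plan is to pass to the infinitesimal (Hilbert-operator) form of the log-Brunn--Minkowski inequality and use representation theory of the symmetry group of $K$ to reduce everything to the global inequality proved above. Write $f=h_L/h_K$ and consider the quadratic form
\[
Q_K(f)=\frac{1}{V(K)}\Bigl(\int_{\S^{n-1}} f h_K\,dS_K\Bigr)^2-\frac{n-1}{n}\int_{\S^{n-1}} f\,\mathcal{L}_K f\, h_K\,dS_K-\frac1n\int_{\S^{n-1}} f^2h_K\,dS_K ,
\]
where $\mathcal{L}_K$ is the Hilbert--Brunn--Minkowski operator of Kolesnikov--Milman, normalized so that the middle term equals $\frac{n-1}{n}V(L,L,K[n-2])$. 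The asserted inequality is $Q_K(f)\ge0$ after rescaling, and since it is homogeneous of degree two in $f$, it suffices to prove $Q_K\ge0$ on the subspace cut out by \eqref{eq:revolution}. I would first reduce to $K\in\mathcal{K}^2_{+,e}$ by approximation, keeping the revolution symmetry; then $\mathcal{L}_K$ is a self-adjoint elliptic operator on $L^2(h_KdS_K)$.

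\textbf{Step 1: odd/even splitting.} Since $K$ is origin-symmetric, $\mathcal{L}_K$ commutes with $u\mapsto-u$, so $Q_K(f)=Q_K(f_{e})+Q_K(f_o)$ for the even and odd parts of $f$. The first term of $Q_K$ only sees $f_e$.

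For $f_o$ I would use the classical (not logarithmic) local Brunn--Minkowski inequality, i.e.\ Minkowski's second inequality in Hilbert form. The relevant fact is that the only odd functions on which the remaining two terms of $Q_K$ can be negative are the restrictions of linear functions, since these span the eigenspace of $\mathcal{L}_K$ at the top odd eigenvalue, coming from translations. Condition \eqref{eq:revolution} says exactly that $f_o$ is $L^2(h_KdS_K)$-orthogonal to every linear function $\langle x,\cdot\rangle/h_K$. Hence $f_o$ lies in the spectral range where $-\frac{n-1}{n}\langle \mathcal{L}_Kf_o,f_o\rangle-\frac1n\|f_o\|^2\ge0$. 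This is the Kolesnikov--Milman treatment of odd perturbations, and I expect it to go through routinely.

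\textbf{Step 2: even part via symmetry.} $K$ is invariant under the group $G\cong O(n-1)$ fixing the axis, so $\mathcal{L}_K$ and the weight $h_KdS_K$ are $G$-invariant. Decompose $L^2_{e}$ into $G$-isotypic components $W_k$, indexed by degree-$k$ spherical harmonics in the $n-1$ variables orthogonal to the axis. Within each irreducible summand, Schur's lemma makes $Q_K$ a scalar multiple of the $L^2$ norm.

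\begin{itemize}
\item \emph{The component $k=0$.} This consists of revolution-invariant even $f$. Here $Q_K(f)\ge0$ follows by differentiating twice at $\lambda=0$ the global inequality of \Cref{cor:revolution}, applied to $K$ and to $L_\varepsilon$ with $h_{L_\varepsilon}=h_Ke^{\varepsilon f}$, a smooth perturbation that is still a body of revolution.
\item \emph{Even $k$.} The representation of degree-$k$ harmonics on $\R^{n-1}$ contains a nonzero vector fixed by the $n-2$ coordinate reflections $\rho_1,\dots,\rho_{n-2}$, for instance $x_{n-1}^k$ minus its harmonic projection correction. Perturbing $K$ in that direction keeps the $n-2$ reflection symmetries, so \Cref{thm:orthogonal-reflections}, differentiated twice, gives $Q_K\ge0$ on that vector. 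By Schur, this gives $Q_K\ge0$ on the whole irreducible summand.
\item \emph{Odd $k$.} Evenness of $f$ forces odd parity along the axis direction. I would try the reflections $\rho_1,\dots,\rho_{n-3}$ together with reflection in the hyperplane through the axis, choosing a fixed vector of the form $x_{n-1}^{k}\cdot(\text{axis coordinate})$. Since \Cref{thm:orthogonal-reflections} only requires $n-2$ orthogonal reflection normals, this again lands in its scope.
\end{itemize}

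\textbf{Main obstacle.} I expect the hardest step to be this odd-$k$ bookkeeping: verifying that every isotypic summand occurring in $L^2_e$ contains a nonzero vector invariant under some set of $n-2$ pairwise orthogonal reflections that also preserve $K$. Only reflections whose hyperplanes contain the axis preserve $K$, so the choice of reflections is constrained. If this fails for some summand, the fallback is a direct spectral estimate on that summand: separation of variables reduces $\mathcal{L}_K$ there to a Sturm--Liouville operator in the axial angle, and one would compare its eigenvalue to the ball case via the degree-$k$ angular term, which increases the eigenvalue gap.
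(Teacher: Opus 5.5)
The paper gives no proof of this statement. It quotes it from Iffland, who proves it by a direct spectral (operator-theoretic) analysis of the Hilbert--Brunn--Minkowski operator of a body of revolution. Judged on its own terms, your proposal has a genuine gap in Step~1, the odd sector.

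\textbf{The odd sector.} First fix the normalization: multiplying the statement by $n^2$ and writing $h_L=fh_K$, the inequality is
\[
\frac{1}{V(K)}\Bigl(\int fh_K\,dS_K\Bigr)^2-n(n-1)V(fh_K,fh_K,K[n-2])-\int f^2h_K\,dS_K\ \ge 0 .
\]
The relative weights in your $Q_K$ are off. This is harmless for the even part, where differentiation supplies the correct constants, but it matters here, where the constant is sharp.

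With $-\mathcal{L}_K$ normalized so that the functions $\langle x,v\rangle/h_K$ have eigenvalue $1$, nonnegativity on odd $f_o$ orthogonal to them says exactly this: every odd eigenvalue of $-\mathcal{L}_K$ other than the translation eigenvalue is at least $\frac{n}{n-1}$.

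Minkowski's second inequality gives much less. It says the spectrum on mean-zero functions lies in $[1,\infty)$, and its equality case says the eigenvalue $1$ occurs only on linear functions. It does not locate the next odd eigenvalue, which a priori could lie anywhere in $(1,\frac{n}{n-1})$. So the inference "linear functions span the extremal odd eigenspace, hence $f_o$ lies in the good spectral range" is a non sequitur. The bound $\lambda_2^{\mathrm{odd}}(-\mathcal{L}_K)\ge\frac{n}{n-1}$ is precisely the content of the theorem for non-symmetric $L$, and no form of the Brunn--Minkowski inequality yields it.

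You also cannot import it from this paper. Every global inequality here requires $L$ to be origin-symmetric, so differentiating them only produces even directions. To close the gap you would have to analyze the odd sector directly. One way is to take each $O(n-1)$-isotypic component $\mathcal{H}_k\otimes\{\psi:\psi(\pi-\phi)=(-1)^{k+1}\psi(\phi)\}$ and prove a Sturm--Liouville eigenvalue bound on it. That is essentially Iffland's route, and your proposal mentions it only as a fallback for the even part.

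\textbf{The even sector.} This half is sound in substance, and it is a genuinely different route. It shows that the symmetric-$L$ case of Iffland's theorem follows from \Cref{thm:orthogonal-reflections}: differentiate twice along $h_Ke^{tf}$, then use $O(n-1)$-equivariance. Three repairs are needed.
\begin{enumerate}
\item The isotypic components $\mathcal{H}_k\otimes F_k$ have infinite multiplicity, so ``scalar on each irreducible summand'' is not enough. What you need is that an invariant form there equals $\langle\cdot,\cdot\rangle_{\mathcal{H}_k}\otimes b_k$, which holds because $\mathcal{H}_k$ is absolutely irreducible under $O(n-1)$. Then checking $Y_0\otimes\psi$ for one fixed $Y_0$ and all $\psi$ suffices.
\item Your odd-$k$ obstacle does not exist. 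The degree-$k$ zonal harmonic on $S^{n-2}$ with pole $e_{n-1}$ is fixed by $\rho_1,\dots,\rho_{n-2}$ for every $k$. Evenness is then arranged by giving the axial factor $\psi$ parity $(-1)^k$.
\item The reflection in the hyperplane orthogonal to the axis also preserves $K$, contrary to your remark. In the approximation step, you must translate $L$ slightly so that the centering condition holds for the smoothed $K$.
\end{enumerate}
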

If $L$ is origin-symmetric convex body of revolution, it satisfies (\ref{eq:revolution}), thus local Brunn-Minkowski inequality naturally holds.
However, if $K$ and $L$ have distinct axes, the intermediate body $(1-\lambda)\cdot K+_0\lambda \cdot L$ is no longer a body of revolution when $\lambda \in (0,1)$.
Thus, we cannot use the local-to-global method to derive global log-Brunn-Minkowski inequality. Here we develop another way to prove log-Brunn-Minkowski inequality for bodies of revolution.

\end{remark}

\section{Log-Minkowski Problem}
\label{lmp}

\begin{proof}[Proof of \Cref{thm:uniqueness}]
Suppose there are two such convex bodies $K, L \in \mathcal{K}^2_+$ sharing the same coordinate reflection symmetries and having the prescribed cone volume measure. By assumption, their cone volume measures $V_K$ and $V_L$ are identical:
\[ V_K(\cdot) = V_L(\cdot) = (f \mathcal{H}^{n-1})(\cdot). \]
Integrating over the sphere $\S^{n-1}$ yields the total volume equality $V(K) = V(L) $.

For $t \in [0, 1]$, define the logarithmic Minkowski sum $M_t =M_t(K,L) := (1-t) \cdot K +_0 t \cdot L$. By \Cref{thm:main}, we have
\[ V(M_t) \ge V(K)^{1-t} V(L)^t = V(K). \]

We now define the function $\phi(t) = \log V(M_t)$. The logarithmic Minkowski sum satisfies a natural inclusion relation, for any $t_1, t_2 \in [0, 1]$ and $\lambda \in [0, 1]$,
\[ M_\lambda(M_{t_1}, M_{t_2}) \subset M_{(1-\lambda)t_1 + \lambda t_2}(K, L). \]
Applying \Cref{thm:main} to the intermediate bodies $M_{t_1}$ and $M_{t_2}$ gives
\[ V(M_{(1-\lambda)t_1 + \lambda t_2}) \ge V(M_{t_1})^{1-\lambda} V(M_{t_2})^\lambda. \]
Taking the logarithm of both sides yields
\[ \phi((1-\lambda)t_1 + \lambda t_2) \ge (1-\lambda)\phi(t_1) + \lambda \phi(t_2). \]
This establishes that $\phi(t)$ is a concave function on $[0, 1]$.

Next, we compute the first derivative of $\phi(t)$ at $t=0$. By the variational formula of \Cref{lem:variation} for the volume of the logarithmic Minkowski sum,
\[ \frac{d}{dt}\bigg|_{t=0} V(M_t) = n \int_{\S^{n-1}} \log\left(\frac{h_L}{h_K}\right) dV_K. \]
Thus, the derivative of $\phi(t)$ at $t=0$ is
\[ \phi'(0) = \frac{n}{V(K)} \int_{\S^{n-1}} \log\left(\frac{h_L}{h_K}\right) dV_K. \]
Since $\phi(t) \ge \log V(K)$ for all $t \in [0, 1]$ and $\phi(0) = \log V(K)$, we must have $\phi'(0) \ge 0$. This implies:
\[ \int_{\S^{n-1}} \log\left(\frac{h_L}{h_K}\right) dV_K \ge 0. \]
Reversing the roles of $K$ and $L$, we obtain
\[ \int_{\S^{n-1}} \log\left(\frac{h_K}{h_L}\right) dV_L \ge 0. \]
Because two cone volume measures are identical, the second inequality can be rewritten as:
\[ \int_{\S^{n-1}} \log\left(\frac{h_L}{h_K}\right) dV_K \le 0. \]
Comparing these two inequalities, we conclude $\phi'(0) = 0$.

Due to concavity, $\phi(t)$ must lie below its tangent line at $t=0$,
\[ \phi(t) \le \phi(0) + \phi'(0)t = \log V(K). \]
However, \Cref{thm:main} guarantees that $\phi(t) \ge \log V(K)$ for all $t \in [0, 1]$. Therefore, we deduce the identity $\phi(t) \equiv \log V(K)$ for all $t \in [0, 1]$.

This identity implies that equality holds in \Cref{thm:main} for all $t \in (0, 1)$. Since both $K$ and $L$ belong to $\mathcal{K}^2_+$, the equality condition of \Cref{thm:main} implies that $L$ must be a dilate of $K$, meaning $L = cK$ for some constant $c > 0$. 
Finally, since $V(K) = V(L)$, we find $c = 1$. Consequently, $K = L$, and the uniqueness is proved.
\end{proof}

\section{Inequality for symmetry invariant log-concave measure}
\label{rim}
We first state a result slightly stronger than the rotationally invariant
case. It isolates the property actually needed by the proof: every sublevel
set of the potential must preserve the full symmetry class in
\Cref{thm:orthogonal-reflections}.  This is Saroglou's transfer principle
\cite{Saroglou2016} restricted to that class, using the same
symmetry-preservation observation as
\cite{BoroczkyKalantzopoulos2022}.  We include the proof because it
also shows that radiality can be replaced by invariance of the potential
under the same reflections.

\begin{theorem}
\label{thm:invariant-measure}
Under the assumptions and notation of \Cref{thm:orthogonal-reflections}, let
\(
 \varphi:\R^n\to\R\cup\{+\infty\}
\)
be a proper lower-semicontinuous convex function with
\(\operatorname{int}(\dom\varphi)\neq\varnothing\), and suppose that
\begin{equation}\label{eq:potential-symmetry}
 \varphi(-x)=\varphi(x),\qquad
 \varphi(\rho_i x)=\varphi(x)
 \quad (x\in\R^n,\ i=1,\ldots,n-2).
\end{equation}
For $a>0$, define
\[
 \nu_\varphi(A)=a\int_A e^{-\varphi(x)}\,dx.
\]
Then, for every $0\leq\lambda\leq1$,
\begin{equation}\label{eq:weighted-main}
 \nu_\varphi\!\left(M_\lambda(K,L)\right)
 \geq
 \nu_\varphi(K)^{1-\lambda}\nu_\varphi(L)^\lambda.
\end{equation}
\end{theorem}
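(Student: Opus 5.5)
Following Saroglou's transfer principle, we write the measure as a superposition of volumes of sublevel sets, apply \Cref{thm:orthogonal-reflections} to suitable sublevel-set intersections, and conclude with the one-dimensional Pr\'ekopa--Leindler inequality. The cases $\lambda\in\{0,1\}$ are trivial, and the constant $a$ scales both sides equally, so we take $a=1$ and $0<\lambda<1$.

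By the layer-cake formula,
\[
 \nu_\varphi(A)=\int_{\R} e^{-s}\,V\bigl(A\cap\{\varphi\le s\}\bigr)\,ds .
\]
For $s\in\R$ let $C_s=\{\varphi\le s\}$. This set is closed and convex, and by \eqref{eq:potential-symmetry} it is origin-symmetric and invariant under every $\rho_i$. For $s,t\in\R$ we want the inclusion
\begin{equation}\label{eq:sublevel-incl}
 (1-\lambda)\cdot (K\cap C_s) +_0 \lambda\cdot (L\cap C_t)\subset M_\lambda(K,L)\cap C_{(1-\lambda)s+\lambda t}.
\end{equation}
Containment in $M_\lambda(K,L)$ follows from monotonicity of the $L_0$-sum, since the support functions satisfy $h_{K\cap C_s}\le h_K$ and $h_{L\cap C_t}\le h_L$.

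The containment in $C_{(1-\lambda)s+\lambda t}$ is the key step and the main obstacle. The point is that for origin-symmetric bodies the $L_0$-sum lies inside the Minkowski combination. Indeed, by the arithmetic--geometric mean inequality, $h_{K'}^{1-\lambda}h_{L'}^{\lambda}\le(1-\lambda)h_{K'}+\lambda h_{L'}$, hence
\[
 (1-\lambda)\cdot K'+_0\lambda\cdot L'\subset(1-\lambda)K'+\lambda L'.
\]
A Minkowski combination of points $x\in C_s$ and $y\in C_t$ satisfies
\[
 \varphi\bigl((1-\lambda)x+\lambda y\bigr)\le(1-\lambda)s+\lambda t
\]
by convexity of $\varphi$, so the Minkowski combination lies in $C_{(1-\lambda)s+\lambda t}$, which gives \eqref{eq:sublevel-incl}.

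Next, whenever $K\cap C_s$ and $L\cap C_t$ both have nonempty interior, they are origin-symmetric convex bodies invariant under all $\rho_i$. Here we use the assumption $\operatorname{int}(\dom\varphi)\neq\varnothing$ together with symmetry and convexity, which give $\min\varphi=\varphi(0)$ and ensure that $C_s$ contains a neighborhood of the origin for $s>\varphi(0)$. \Cref{thm:orthogonal-reflections} combined with \eqref{eq:sublevel-incl} then yields
\[
 V\bigl(M_\lambda(K,L)\cap C_{(1-\lambda)s+\lambda t}\bigr)\ge V(K\cap C_s)^{1-\lambda}V(L\cap C_t)^{\lambda}.
\]
If one of the two sets is lower-dimensional, its volume vanishes and the inequality is trivial.

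We then set $f(s)=e^{-s}V(K\cap C_s)$, $g(t)=e^{-t}V(L\cap C_t)$ and $h(r)=e^{-r}V(M_\lambda(K,L)\cap C_r)$. The displayed inequality gives
\[
 h\bigl((1-\lambda)s+\lambda t\bigr)\ge f(s)^{1-\lambda}g(t)^{\lambda}.
\]
These functions are measurable because they are monotone times continuous, and integrable because the bodies are bounded and so the volumes are bounded. The one-dimensional case of \Cref{thm:PL} therefore gives
\[
 \int h\ge\Bigl(\int f\Bigr)^{1-\lambda}\Bigl(\int g\Bigr)^{\lambda},
\]
which by the layer-cake identity is exactly \eqref{eq:weighted-main}.
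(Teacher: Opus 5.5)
Your proposal is correct and follows essentially the same route as the paper. Both arguments write $\nu_\varphi$ as a layer-cake integral over the sublevel sets $C_s$ and prove the inclusion $M_\lambda(K\cap C_s,L\cap C_t)\subset M_\lambda(K,L)\cap C_{(1-\lambda)s+\lambda t}$ using monotonicity, the AM--GM comparison with the Minkowski combination, and convexity of $\varphi$. They then apply \Cref{thm:orthogonal-reflections} and conclude with the one-dimensional Pr\'ekopa--Leindler inequality.

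One small point concerns integrability of $e^{-s}V(K\cap C_s)$ on all of $\R$. Bounded volumes alone do not give it, because $e^{-s}$ blows up as $s\to-\infty$. You also need $C_s=\varnothing$ for $s<\varphi(0)=\min\varphi$, which you already established. Alternatively, it follows from the layer-cake identity together with $e^{-\varphi}\le e^{-\varphi(0)}$.
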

\begin{remark}
    Here and below, $e^{-(+\infty)}=0$.  In particular,
$0<\nu_\varphi(K),\nu_\varphi(L)<\infty$, so (\ref{eq:weighted-main}) is meaningful. No assumption that $\nu_\varphi(\R^n)$ is finite is required.
\end{remark}

\begin{proof}
Evenness and convexity of $\varphi$, together with
$\operatorname{int}(\dom\varphi)\neq\varnothing$, imply
\[
 0\in\operatorname{int}(\dom\varphi),
 \qquad \varphi(0)=\min_{\R^n}\varphi\in\R.
\]
Thus $e^{-\varphi}\leq e^{-\varphi(0)}$, while the density is positive on
$\operatorname{int}(\dom\varphi)$.  Since each convex body under
consideration is compact, its $\nu_\varphi$-mass is
finite and positive.

The endpoint cases $\lambda=0,1$ are identities, so fix
$0<\lambda<1$ and denote
\[
 M=M_\lambda(K,L),
 \qquad C_q=\{x\in\R^n:\varphi(x)\leq q\},
 \qquad q\in\R.
\]
For $s,t\in\R$, set
\[
 K_s=K\cap C_s,\qquad L_t=L\cap C_t,
 \qquad r=(1-\lambda)s+\lambda t.
\]
By \eqref{eq:potential-symmetry}, the sets $C_q$, and consequently $K_s$ and $L_t$, are origin-symmetric and invariant under every
$\rho_i$.
If either $K_s$ or $L_t$ has zero $n$-dimensional volume, then
\begin{equation}\label{eq:section-volume-zero}
 V(M\cap C_r)
 \geq V(K_s)^{1-\lambda}V(L_t)^\lambda=0.
\end{equation}
We may therefore assume that $K_s$ and $L_t$ have positive volume.
We claim that
\begin{equation}\label{eq:key-inclusion}
 M_\lambda(K_s,L_t)\subseteq M\cap C_r.
\end{equation}
Indeed, $K_s\subseteq K$ and $L_t\subseteq L$ give
$
 M_\lambda(K_s,L_t)\subseteq M.
$
The arithmetic-geometric mean inequality for support functions gives
\[
 h_{K_s}^{\,1-\lambda}h_{L_t}^{\,\lambda}
 \leq (1-\lambda)h_{K_s}+\lambda h_{L_t}
 =h_{(1-\lambda)K_s+\lambda L_t};
\]
hence
\[
 M_\lambda(K_s,L_t)
 \subseteq(1-\lambda)K_s+\lambda L_t.
\]
Finally, convexity of $\varphi$ implies
\[
 (1-\lambda)K_s+\lambda L_t\subseteq C_r,
\]
which proves \eqref{eq:key-inclusion}.

The inclusion \eqref{eq:key-inclusion} and
\Cref{thm:orthogonal-reflections} yield
\begin{equation}\label{eq:section-volume}
 V(M\cap C_r)
 \geq V\left(M_\lambda(K_s,L_t)\right)
 \geq V(K_s)^{1-\lambda}V(L_t)^\lambda.
\end{equation}
Together with \eqref{eq:section-volume-zero}, this inequality holds for every
$s,t\in\R$.

Define nonnegative functions on $\R$ by
$$F(q)=e^{-q}V(M\cap C_q), \qquad G(q)=e^{-q}V(K\cap C_q),\qquad H(q)=e^{-q}V(L\cap C_q).$$
Since $e^{-r}=(e^{-s})^{1-\lambda}(e^{-t})^\lambda$,
\eqref{eq:section-volume} becomes
\[
 F((1-\lambda)s+\lambda t)
 \geq G(s)^{1-\lambda}H(t)^\lambda.
\]
Since $F,G,H$ are integrable and measurable, the one-dimensional
Pr\'ekopa--Leindler inequality therefore gives
\begin{equation}\label{eq:one-dimensional-PL}
 \int_\R F(q)\,dq
 \geq
 \left(\int_\R G(q)\,dq\right)^{1-\lambda}
 \left(\int_\R H(q)\,dq\right)^\lambda.
\end{equation}
For any one of the compact sets $M,K,L$, Tonelli's theorem yields 
\begin{align}\label{eq:layer-cake}
 \int_\R e^{-q}V(A\cap C_q)\,dq
 &=\int_A\int_{\varphi(x)}^\infty e^{-q}\,dq\,dx \notag\\
 &=\int_A e^{-\varphi(x)}\,dx.
\end{align}
Substitution of \eqref{eq:layer-cake} into
\eqref{eq:one-dimensional-PL} proves \eqref{eq:weighted-main} for $a=1$.
The general normalization follows from
$a=a^{1-\lambda}a^\lambda$.
\end{proof}

\begin{remark}
\label{rem:equality}
No equality classification is asserted in
\Cref{thm:invariant-measure}.  The transfer proof combines equality
conditions in the level-set inclusion, the Lebesgue log-BMI, and
one-dimensional Pr\'ekopa--Leindler.  For a nonhomogeneous measure such as
Gaussian measure, $L=cK$ is generally not sufficient for equality.  Equality
is automatic when $K=L$, and also at the endpoint parameters $\lambda=0,1$.
\end{remark}

We now record the two formulations useful in applications.

\begin{proposition}\label{prop:radial}
Under the assumptions and notation of \Cref{thm:orthogonal-reflections}, let
\[
 \psi:[0,\infty)\longrightarrow\R\cup\{+\infty\}
\]
be lower-semicontinuous, convex, and nondecreasing, and assume that
$\psi(r_0)<\infty$ for some $r_0>0$.  Given $a>0$, let
\begin{equation}\label{eq:radial-density}
 d\nu_\psi(x)=a e^{-\psi(| x |)}\,dx.
\end{equation}
Then
\begin{equation}\label{eq:radial-log-bmi}
 \nu_\psi\!\left(M_\lambda(K,L)\right)
 \geq
 \nu_\psi(K)^{1-\lambda}\nu_\psi(L)^\lambda,
 \qquad 0\leq\lambda\leq1.
\end{equation}
\end{proposition}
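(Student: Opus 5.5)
The plan is to deduce \Cref{prop:radial} directly from \Cref{thm:invariant-measure} by setting $\varphi(x)=\psi(|x|)$ and checking that this potential meets all the hypotheses of that theorem. Once this is done, $\nu_\psi=\nu_\varphi$ and \eqref{eq:radial-log-bmi} is exactly \eqref{eq:weighted-main}.

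\textbf{Convexity and lower semicontinuity.} Convexity of $\varphi$ is the standard composition argument. For $x,y\in\R^n$ and $\mu\in[0,1]$, the triangle inequality gives
\[
|(1-\mu)x+\mu y|\le(1-\mu)|x|+\mu|y|.
\]
Because $\psi$ is nondecreasing, this yields $\varphi((1-\mu)x+\mu y)\le\psi((1-\mu)|x|+\mu|y|)$. Convexity of $\psi$ then bounds the right-hand side by $(1-\mu)\varphi(x)+\mu\varphi(y)$, with the usual conventions for $+\infty$. Lower semicontinuity follows since $x\mapsto|x|$ is continuous and $\psi$ is lower semicontinuous on $[0,\infty)$. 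Properness is immediate: $\psi$ takes values in $\R\cup\{+\infty\}$ and is finite at $r_0$, so $\varphi$ never equals $-\infty$ and is finite somewhere.

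\textbf{Interior of the domain.} This is the only step needing a little care. Since $\psi$ is nondecreasing and $\psi(r_0)<\infty$, we have $\psi(r)\le\psi(r_0)<\infty$ for every $r\in[0,r_0]$. Hence the open ball of radius $r_0$ is contained in $\dom\varphi$, so $\operatorname{int}(\dom\varphi)\neq\varnothing$. This is precisely where the hypothesis $r_0>0$ is used.

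\textbf{Symmetry condition.} Condition \eqref{eq:potential-symmetry} holds because $-I$ and each $\rho_i$ are orthogonal maps, so they preserve $|x|$. Therefore $\varphi(-x)=\varphi(\rho_i x)=\varphi(x)$ for all $x\in\R^n$ and $i=1,\ldots,n-2$.

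\textbf{Conclusion.} Applying \Cref{thm:invariant-measure} with the same constant $a$ gives \eqref{eq:radial-log-bmi} for all $0\le\lambda\le1$. \Cref{thm:gaussian} is then the special case $\psi(r)=r^2/2$, $a=(2\pi)^{-n/2}$.
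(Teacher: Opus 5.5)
Your proposal is correct and follows the paper's proof: set $\varphi(x)=\psi(|x|)$, get convexity from the triangle inequality combined with monotonicity and convexity of $\psi$, check lower semicontinuity, evenness and $\rho_i$-invariance, and apply \Cref{thm:invariant-measure}. You also verify properness and $\operatorname{int}(\dom\varphi)\neq\varnothing$ (via $\psi\le\psi(r_0)$ on $[0,r_0]$); the paper leaves this implicit, but it is a hypothesis that theorem requires.
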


\begin{proof}
Set $\varphi(x)=\psi(\left | x \right | )$.  For $0\leq\theta\leq1$,
\[
 \begin{split}
 \varphi((1-\theta)x+\theta y)
 &\leq \psi((1-\theta)\left | x \right | +\theta \left | y \right | )\\
 &\leq(1-\theta)\varphi(x)+\theta\varphi(y).
 \end{split}
\]
It is lower-semicontinuous, even, and invariant under $n-2$ reflection symmetries,
so \Cref{thm:invariant-measure} applies.
\end{proof}

\begin{remark}
Up to a positive multiplicative constant, every full-dimensional,
absolutely continuous, rotationally invariant log-concave measure has the
form \eqref{eq:radial-density}.  Indeed, rotational invariance makes its
convex potential radial, and a function $x\mapsto\psi(| x |)$ is
convex exactly when $\psi$ is convex and nondecreasing.  Allowing the value
$+\infty$ includes, for example, the uniform measure on a centered Euclidean
ball.
\end{remark}

Finally, \Cref{thm:gaussian} is now immediate.
\begin{proof}[Proof of \Cref{thm:gaussian}]
Apply \Cref{prop:radial} with
$\psi(r)=r^2/2$ and $a=(2\pi)^{-n/2}$.
\end{proof}

\bibliographystyle{amsplain}
\bibliography{references}

\end{document}